\newtheorem{theorem}{Theorem}[section]
\newtheorem{lemma}{Lemma}[section]
\newtheorem{definition}{Definition}[section]
\newtheorem{example}{Example}[section]
\newtheorem{proposition}{Proposition}[section]
\newtheorem{corollary}{Corollary}[section]
\newtheorem{assumption}{Assumption}[section]
\newtheorem{remark}{Remark}[section]
\newtheorem{alemma}{Lemma}
\newenvironment{proof}{{\noindent \bf Proof:}}{\hfill$\Box$\medskip}
\newenvironment{aproof}{{\noindent \bf Proof:}}{\hfill$\Box$\medskip}
\definecolor{lred}{rgb}{1,0.8,0.8}
\definecolor{lblue}{rgb}{0.8,0.8,1}
\definecolor{dred}{rgb}{0.6,0,0}
\definecolor{dblue}{rgb}{0,0,0.5}
\definecolor{dgreen}{rgb}{0,0.5,0.5}
\title{Subregularity of subdifferential mappings relative to the critical set and KL property of exponent 1/2}
  \author{Shaohua Pan\footnote{School of Mathematics, South China University of Technology, Guangzhou. shhpan@scut.edu.cn}
  \ \ {\rm and}\ \
 Yulan Liu\footnote{Corresponding author(ylliu@gdut.edu.cn), School of Applied Mathematics, Guangdong University of Technology, Guangzhou.}}
\begin{document}

 \maketitle
\begin{abstract}
  For a proper extended real-valued function, this work focuses on the relationship
  between the subregularity of its subdifferential mapping relative to the critical set
  and its KL property of exponent 1/2. When the function is lsc convex, we establish
  the equivalence between them under the continuous assumption on the critical set.
  Then, for the uniformly prox-regular function, under its continuity on
  the local minimum set, the KL property of exponent 1/2 on the local minimum set
  is shown to be equivalent to the subregularity of its subdifferential relative to this set.
  Moreover, for this class of nonconvex functions, under a separation assumption
  of stationary values, we show that the subregularity of its subdifferential
  relative to the critical set also implies its KL property of exponent $1/2$.
  These results provide a bridge for the two kinds of regularity,
  and their application is illustrated by examples.
 \end{abstract}
 \noindent
 {\bf Keywords:} {Subregularity; KL property of exponent $1/2$; subdifferential}
 \section{Introduction}

  For a proper extended real-valued function, the (metric) subregularity of its
  (limiting) subdifferential at a (limiting) critical point for the origin
  states that for any point near the critical point, its distance to
  the critical set is upper bounded by the remoteness of the subdifferential
  at this point. When the function is lower semicontinuous (lsc) convex,
  Artacho and Geoffroy \cite{Artacho08} provided an equivalent characterization
  for this property in terms of the quadratic growth of the function,
  and this property plays a crucial role in the linear convergence analysis of
  the first-order methods (see, e.g., \cite{Zhou17,Cui18}). However,
  when the function is nonconvex, there are few works on the subregularity
  of the subdifferential at critical points especially the relationships with
  other error bounds, except that Drusvyatskiy et al. \cite{Drusvyatskiy15}
  proved that the subregularity of the subdifferential at a local minimum
  implies the quadratic growth at this point (see also \cite[Remark 2.2(iii)]{Artacho14}).

  \medskip

  Recently, the KL property of a proper function is successfully applied for analyzing
  the global convergence of algorithms for nonconvex and nonsmooth optimization problems
  (see \cite{Attouch10,Attouch13,Bolte14}). In particular, the KL property of exponent 1/2
  is the key to achieve the linear convergence of the corresponding algorithms.
  From \cite[Corollary 2]{Ngai09}, we know that for a proper function continuous
  on the global minimum set, its KL property of exponent $1/2$ over the global
  minimum set implies its quadratic growth over this set. Then, it is natural to
  ask what is the relationship between the subregularity of the subdifferential
  at a critical point and the KL property of exponent 1/2 at the reference point.
  This work focuses on this so as to build a bridge for the two kinds of regularity.

  \medskip

  When the function is lsc convex, Theorem 5 of \cite{Bolte17} implies the equivalence
  between the KL property of exponent $1/2$ and an error bound that is weaker than
  the subregularity of the subdifferential. In Section \ref{sec3.1}, under the continuity
  on the critical set, we establish the equivalence between its KL property of exponent 1/2
  and the subregularity of its subdifferential relative to the critical set. Then,
  in Section \ref{sec3.2} we extend this equivalence to a class of nonconvex functions,
  i.e., for the uniformly prox-regular function, under its continuity on the local
  minimum set, the KL property of exponent 1/2 on the local minimum set is shown
  to be equivalent to the subregularity of its subdifferential relative to this set.
  In particular, for the uniformly prox-regular function, we also show that
  under a separation assumption of stationary values, the subregularity of its
  subdifferential relative to the critical set implies its KL property of exponent $1/2$.

  \medskip

  Uniformly prox-regular functions are abundant, including
  the locally Lipschitz prox-regular, primal-lower-nice (pln) and
  lower $C^2$-functions, the semiconvex function, and the sum of
  a proper closed convex function and a proper function that
  is smooth on its open domain (see Theorem \ref{theo-composite}).
  For the last structured nonconvex functions, Li and Pong \cite{LiPong18}
  showed that under \cite[Assumption B]{Luo93}, the Luo-Tseng error bound \cite{Tseng09}
  implies the KL property of exponent 1/2. Whereas our Theorem \ref{theo-composite}
  shows that under a little weaker assumption on stationary values,
  the subregularity of its subdifferential relative to the critical set
  implies the KL property of exponent 1/2. We also study the link between the subregularity
  of the subdifferential relative to the critical set and the Luo-Tseng error bound,
  and extend the result of \cite[Section 3]{Zhou17} partially to the nonconvex setting.
 \section{Notations and preliminaries}\label{sec2}

  We denote $\mathbb{X}$ by a finite-dimensional vector
  space equipped with the inner product $\langle \cdot,\cdot\rangle$ and its induced
  norm $\|\cdot\|$. For an extended real-valued function
  $f\!:\mathbb{X}\to\overline{\mathbb{R}}:=[-\infty,+\infty]$,
  $f$ is called proper if $f(x)>-\infty$ for each $x\in\mathbb{X}$
  and ${\rm dom}\,f:=\{x\in\mathbb{X}\,|\, f(x)<\infty\}\ne\emptyset$,
  and for given real numbers $\alpha$ and $\beta$, set
  $[\alpha\le f\le\beta]:=\{x\in\mathbb{X}\!: \alpha\le f(x)\le \beta\}$.
  For a proper $f\!:\mathbb{X}\to\overline{\mathbb{R}}$,
  we use $x'\xrightarrow[f]{}x$ to signify $x'\to x$ and $f(x')\to f(x)$.
  For a given $\overline{x}\in\mathbb{X}$ and a constant $\varepsilon>0$,
  $\mathbb{B}(\overline{x},\varepsilon)$ denotes the closed ball centered at $\overline{x}$
  of radius $\varepsilon$. For a proper convex function $\phi\!:\mathbb{X}\to\overline{\mathbb{R}}$,
  denote by ${\rm prox}_{\phi}$ its proximal operator, and for a closed set $S\subseteq\mathbb{X}$,
  the notation $\Pi_S$ denotes the projection operator on $S$.
 \subsection{Generalized subdifferentials and subregularity}\label{sec2.1}
 \begin{definition}\label{rsubgrad}(\cite[Definition 8.3 $\&$ 8.45]{RW98})
  Consider a function $f\!:\mathbb{X}\to\overline{\mathbb{R}}$ and a point
  $x$ with $f(x)$ finite. The regular subdifferential of $f$ at $x$ is defined by
   \[
    \widehat{\partial}\!f(x):=\bigg\{v\in\mathbb{X}\ |\
    \liminf_{x'\to x\atop x'\ne x}
    \frac{f(x')-f(x)-\langle v,x'-x\rangle}{\|x'-x\|}\ge 0\bigg\},
  \]
  and the (limiting) subdifferential $\partial\!f(x)$, horizon subdifferential
  $\partial^{\infty}f(x)$ and proximal subdifferential $\partial^P\!f(x)$
  of $f$ at $x$ are respectively defined by
  \begin{subequations}
  \[
    \partial\!f(x):=\Big\{v\in\mathbb{X}\ |\ \exists\,x^k\xrightarrow[f]{}x,
    v^k\to v\ {\rm with}\ v^k\in\widehat{\partial}\!f(x^k)\ {\rm for\ each}\ k\Big\},
  \]
  \[
    \partial^{\infty}f(x):=\Big\{v\in\mathbb{R}^p\ |\  \exists\,x^k\xrightarrow[f]{}x,
    \lambda^k\downarrow 0\ {\rm and}\ v^k\in\widehat{\partial}\!f(x^k)\ {\rm with}\ \lambda^kv^k\to v\ {\rm as}\ k\to\infty\Big\}.
  \]
  \[
   \partial^P\!f(x):=\bigg\{v\in\mathbb{X}\ |\
    \liminf_{x'\to x,x'\ne x}\frac{f(x')-f(x)-\langle v,x'-x\rangle}{\|x'-x\|^2}>-\infty\bigg\}.
  \]
 \end{subequations}
 \end{definition}
 \begin{remark}\label{remark-Fsubdiff}
  At each $x\in{\rm dom}f$, $\widehat{\partial}\!f(x)\subseteq\partial\!f(x)$,
  where the former is closed convex but the latter is generally nonconvex.
  When $f$ is convex, both $\widehat{\partial}\!f(x)$ and $\partial\!f(x)$
  reduce to the subdifferential of $f$ at $x$ in the sense of convex analysis.
  The point $\overline{x}$ at which $0\in\partial\!f(\overline{x})$
  is called a (limiting) critical point of $f$, and we denote by
  ${\rm crit}f$ the set of critical points.
 \end{remark}

 \begin{definition}\label{subregularity}
   Let $\mathcal{F}\!:\mathbb{X}\rightrightarrows\mathbb{X}$ be a multifunction.
   Consider an arbitrary $(\overline{x},\overline{y})\in{\rm gph}\mathcal{F}$.
   We say that $\mathcal{F}$ is (metrically) subregular at $\overline{x}$
   for $\overline{y}$ if there exist $\varepsilon>0$ and $\kappa>0$ such that
   for all $x\in\mathbb{B}(\overline{x},\varepsilon)$,
   \(
    {\rm dist}(x,\mathcal{F}^{-1}(\overline{y}))\le \kappa{\rm dist}(\overline{y},\mathcal{F}(x)).
   \)
  \end{definition}

  Definition \ref{subregularity} is a little different from the original one,
  and we here adopt an equivalent form by \cite[Section 3H]{DR09}.
  It is well-known that the subregularity of $\mathcal{F}$ at $\overline{x}$
  for $\overline{y}\in\mathcal{F}(\overline{x})$ iff its inverse $\mathcal{F}^{-1}$
  is calm at $\overline{y}$ for $\overline{x}\in\mathcal{F}^{-1}(\overline{y})$.
  For the recent discussions on the criterion of calmness and subregularity,
  the reader may refer to \cite{Gfrerer13,Henrion02}. In this work, we focus on
  the subregularity of the subdifferential of a proper function
  $f\!:\mathbb{X}\to\overline{\mathbb{R}}$, and say that $\partial\!f$ is
  subregular relative to ${\rm crit}f$ for the origin if it is subregular
  at each $x\in{\rm crit}f$ for the origin.
  Generally, it is not an easy task to check whether the subdifferential
  of a proper (even convex) function is subregular or not.
  Lemma \ref{KL-convex} of Appendix summarizes some (convex) functions
  whose subdifferentials are subregular at each point of their graphs.
 \subsection{Kurdyka-{\L}ojasiewicz property}\label{sec2.2}

 \begin{definition}\label{KL-Def1}
  Let $f\!:\mathbb{X}\!\to\overline{\mathbb{R}}$ be a proper function.
  The function $f$ is said to have the Kurdyka-{\L}ojasiewicz (KL) property
  at $\overline{x}\in{\rm dom}\,\partial\!f$ if there exist $\eta\in(0,+\infty]$,
  a continuous concave function $\varphi\!:[0,\eta)\to\mathbb{R}_{+}$ satisfying
  the following properties
  \begin{itemize}
    \item [(i)] $\varphi(0)=0$ and $\varphi$ is continuously differentiable on $(0,\eta)$;

    \item[(ii)] for all $s\in(0,\eta)$, $\varphi'(s)>0$,
  \end{itemize}
  and a neighborhood $\mathcal{U}$ of $\overline{x}$ such that
  for all
  \(
    x\in\mathcal{U}\cap\big[f(\overline{x})<f<f(\overline{x})+\eta\big],
  \)
  \[
  \varphi'(f(x)-f(\overline{x})){\rm dist}(0,\partial\!f(x))\ge 1.
  \]
  If the corresponding $\varphi$ can be chosen as $\varphi(s)=c\sqrt{s}$
  for some $c>0$, then $f$ is said to have the KL property at $\overline{x}$
  with an exponent of $1/2$. If $f$ has the KL property of exponent $1/2$
  at each point of ${\rm dom}\,\partial\!f$,
  then $f$ is called a KL function of exponent $1/2$.
 \end{definition}
 \begin{remark}\label{KL-remark}
  To argue that a proper $f$ is a KL function of exponent $1/2$,
  it suffices to check whether it has the KL property of $1/2$
  at all critical points or not, since by \cite[Lemma 2.1]{Attouch10} it has
  the property at any noncritical point.
 \end{remark}
 \subsection{Pln and uniformly prox-regular functions}\label{sec2.3}
 \begin{definition}\label{Def-pln}(see \cite[Definition 3.1]{Poliquin91} or \cite[Definition 2.1]{Levy95})
  Let $f\!:\mathbb{X}\to\overline{\mathbb{R}}$ be a proper function.
  We say that $f$ is primal-lower-nice (pln) at $\overline{x}\in{\rm dom}f$ if there exist
  $\overline{\rho}>0,\overline{c}>0$ and $\overline{\varepsilon}>0$ such that
  for all $x,y\in\mathbb{B}(\overline{x},\overline{\varepsilon})$,
  $\rho>\overline{\rho}$, $v\in\partial^P\!f(x)$ with $\|v\|\le\overline{c}\rho$,
  \[
   f(y)\ge f(x)+\langle v,y-x\rangle -\frac{\rho}{2}\|y-x\|^2.
  \]
  If $f$ is pln at each $x\in{\rm dom}f$, then we say that $f$ is a pln function.
 \end{definition}

 By \cite[Remark 1.5]{Marcellin06}, for a pln function $f$,
 $\partial^P\!f(x)=\widehat{\partial}\!f(x)=\partial\!f(x)=\partial^C\!f(x)$
 for all $x\in{\rm dom}f$, where $\partial^C\!f(x)$ is the Clarke subdifferential
 of $f$. That is, the definition of the pln property is independent of the involved
 subdifferential. The pln function includes strongly amenable functions
  and semiconvex functions. For strongly amenable functions, the reader is
  invited to see \cite[Section 10.F]{RW98}. A proper function
  $f\!:\mathbb{X}\to\overline{\mathbb{R}}$ is called semiconvex if there exists
  $\gamma\ge 0$ such that $x\mapsto f(x)+\frac{1}{2}\gamma\|x\|^2$ is convex.
 \begin{definition}\label{Uprox-regular}
  Let $f\!:\mathbb{X}\to\overline{\mathbb{R}}$ be a proper function.
  We say that $f$ is uniformly prox-regular at $\overline{x}\in{\rm dom}f$
  if there exist $\delta>0$ and $\rho\ge 0$ such that
  \begin{equation}\label{ineq-prox}
    f(y)\ge f(x)+\langle v, y-x\rangle - \frac{\rho}{2}\|y-x\|^2
  \end{equation}
  whenever $x,y\in\mathbb{B}(\overline{x},\delta)$ and $v\in\partial\!f(x)$
  with $f(x)\le f(\overline{x})+\delta$.
 \end{definition}

 The uniform prox-regularity of $f$ at $\overline{x}$ in Definition \ref{Uprox-regular}
 was introduced by Daniilidis et al. \cite{Daniilidis08}, which is weaker than
 the uniform prox-regularity of $f$ on some neighborhood of $\overline{x}$
 introduced in \cite{Bernard05}. By combining this observation with \cite[Proposition 3.9]{Bernard05},
 if $f$ is locally Lipschitz continuous at $\overline{x}$, the pln of $f$
 at $\overline{x}$ implies its uniform prox-regularity at $\overline{x}$.
 In addition, it is easy to check that if $f$ is continuous at $\overline{x}$,
 its uniform prox-regularity at $\overline{x}$ implies its pln at $\overline{x}$.
 This means that when $f$ is locally Lipschitz continuous at $\overline{x}$,
 the pln of $f$ at $\overline{x}$, the uniform prox-regularity of $f$ at $\overline{x}$,
 and the uniform prox-regularity of $f$ on some neighborhood of $\overline{x}$ are the same,
 which by \cite[Proposition 3.11]{Bernard05} are equivalent to the prox-regularity
 of $f$ at $\overline{x}$ as well as the lower-$C^2$ property of $f$ at $\overline{x}$.
 \section{Subregularity and KL property of exponent $1/2$}\label{sec3}

 In this section, for a proper $f\!:\mathbb{X}\to\overline{\mathbb{R}}$,
 we shall investigate the link between the subregularity of $\partial\!f$
 relative to ${\rm crit}f$ and the KL property of exponent ${1}/{2}$ of $f$.
 \subsection{The case that $f$ is convex}\label{sec3.1}

  For a proper lsc convex $f$ and a point $\overline{x}\in{\rm crit}f$,
  Theorem 5 of \cite{Bolte17} implies the equivalence
  between the KL property of exponent $1/2$ of $f$ at $\overline{x}$
  and the following error bound
  \begin{align}\label{errbound}
   \exists\,\varepsilon>0,r_0>0,c_0>0\ \ {\rm s.t.}\ \ {\rm for\ all}\ x\in[f(\overline{x})<f<r_0]\cap\mathbb{B}(\overline{x},\varepsilon),\nonumber\\
   f(x)-f(\overline{x})\ge c_0{\rm dist}^2(x,(\partial\!f)^{-1}(0)).\qquad\qquad\qquad
  \end{align}
  In this part, under the continuity of $f$ at $\overline{x}$,
  we achieve the equivalence among the subregularity of $\partial\!f$
  at $\overline{x}$ for the origin, the KL property of exponent $1/2$
  of $f$ at $\overline{x}$, and the following quadratic growth
  \begin{equation}\label{quadratic-growth}
   \exists\varepsilon>0,\nu>0\ \ {\rm s.t.}\ \
   f(x)-f(\overline{x})\ge\nu {\rm dist}^2(x,(\partial\!f)^{-1}(0))
  \ \ {\rm for\ all}\ x\in\mathbb{B}(\overline{x},\varepsilon).
  \end{equation}
  This requires the following lemma (see \cite{Bolte17}), which summarizes some favorable
  properties of the trajectories of the differential inclusion for a proper lsc
  convex function. Its proof was given in \cite{Brezis73} except for part (v),
  and the proof of part (v) was provided in \cite{Bruck75}.
 \begin{lemma}\label{convex-curve}
  Let $h\!:\mathbb{X}\to\overline{\mathbb{R}}$ be a proper lsc convex function with
  ${\rm crit}h\ne\emptyset$. For each $x\in{\rm dom}h$, there is a unique absolutely
  continuous curve $\chi_x\!:[0,+\infty)\to\!\mathbb{X}$ such that
  \begin{align}\label{inclusion}
  \left\{\begin{array}{l}
   \dot{\chi}_{x}(t)\in-\partial h(\chi_{x}(t))\ \ {\rm a.e.}\ {\rm on}\ (0,+\infty),\\
   \chi_{x}(0)=x.
   \end{array}\right.
  \end{align}
  Also, the curve $\chi_x$ (called a subgradient curve) has the following properties:
  \begin{itemize}
   \item[(i)] For all $t>0$, the right derivative $\frac{d}{dt}\chi_{x}(t^{+})$ of $\chi_x$
              is well defined and satisfies
              \[
                \frac{d}{dt}\chi_{x}(t^{+})=-\partial^{0}h(\chi_{x}(t)):=-\mathop{\arg\min}_{z\in\partial h(\chi_{x}(t))}\|z\|;
               \]
   \item[(ii)] $\frac{d}{dt}h(\chi_{x}(t^{+}))=-\|\dot{\chi}_{x}(t^{+})\|^2$ for all $t>0$;

   \item[(iii)] for each $z\in{\rm crit}h$, the function $t\mapsto \|\chi_{x}(t)-z\|$ decreases;

   \item[(iv)] the function $t\mapsto h(\chi_{x}(t))$ is nonincreasing and $\lim_{t\to\infty}h(\chi_{x}(t))=h^*$;

   \item[(v)] $\chi_{x}(t)$ converges to some $\widehat{x}\in{\rm crit}h$ as $t\to\infty$.
  \end{itemize}
 \end{lemma}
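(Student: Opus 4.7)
The plan is to view the differential inclusion \eqref{inclusion} as the evolution governed by the maximal monotone operator $\partial h$ on the Hilbert space $\mathbb{X}$, and to invoke the Brezis--Komura semigroup theory. First I would establish existence and uniqueness of an absolutely continuous trajectory $\chi_x$ with $\chi_x(0)=x\in\overline{{\rm dom}\,h}$ via Yosida regularization: replace $\partial h$ by its Moreau--Yosida approximation $(\partial h)_\lambda$, which is $(1/\lambda)$-Lipschitz, solve the classical ODE $\dot{u}_\lambda=-(\partial h)_\lambda(u_\lambda)$ with $u_\lambda(0)=x$, and then pass to the limit $\lambda\downarrow 0$ by extracting uniform $C([0,T];\mathbb{X})$ Cauchy estimates from the monotonicity of $\partial h$. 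Uniqueness follows immediately from monotonicity applied to two candidate trajectories.

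For parts (i) and (ii), I would exploit the regularization effect peculiar to subdifferential semigroups: the nonlinear semigroup generated by $-\partial h$ sends ${\rm dom}\,h$ into ${\rm dom}\,\partial h$ for every $t>0$ and admits a right-derivative at every $t>0$ equal to the minimal-norm selection of $-\partial h(\chi_x(t))$, which is exactly (i). Part (ii) is then the convex chain rule: along an absolutely continuous curve whose velocity lies almost everywhere in $-\partial h(\chi_x(t))$, the composition $h\circ\chi_x$ is absolutely continuous on compact subintervals of $(0,\infty)$ and its classical derivative equals $\langle \partial^{0}h(\chi_x(t)),\dot{\chi}_x(t^{+})\rangle=-\|\dot{\chi}_x(t^{+})\|^{2}$.

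Part (iii) is a one-line monotonicity computation: differentiating $\tfrac{1}{2}\|\chi_x(t)-z\|^2$ gives $\langle \dot{\chi}_x(t),\chi_x(t)-z\rangle$, and since $\dot{\chi}_x(t)\in-\partial h(\chi_x(t))$ and $0\in\partial h(z)$, monotonicity of $\partial h$ forces this quantity to be nonpositive. Part (iv) combines (ii) (so that $t\mapsto h(\chi_x(t))$ is nonincreasing) with the integrated energy identity $h(x)-h(\chi_x(t))=\int_{0}^{t}\|\dot{\chi}_x(s^{+})\|^{2}ds$; the limit coincides with $h^{*}:=\inf h$ because any cluster point of $\chi_x(t)$ must be a zero of $\partial h$ by demiclosedness of the graph of $\partial h$ together with the fact that $\|\dot{\chi}_x(s^{+})\|\to 0$ along a subsequence (forced by integrability of $\|\dot{\chi}_x\|^{2}$ on $(0,\infty)$).

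The genuinely delicate step is part (v), which is Bruck's theorem; the plan is to apply Opial's lemma. Boundedness of the trajectory follows from (iii) applied to any fixed $z\in{\rm crit}\,h$. To identify every cluster point as a critical point, I would combine (iv), lower semicontinuity of $h$, and demiclosedness of the graph of $\partial h$, with $\|\dot{\chi}_x(t_n^{+})\|\to 0$ along a suitable sequence $t_n\to\infty$. Opial's lemma then upgrades boundedness together with the inclusion of all cluster points in ${\rm crit}\,h$ into weak convergence to some $\widehat{x}\in{\rm crit}\,h$; since $\mathbb{X}$ is finite dimensional, this weak limit is automatically a strong limit. I expect this last step to be the main obstacle, since the strong-convergence version in general Hilbert space is precisely the nontrivial content of Bruck's original argument, and the Opial mechanism requires carefully matching the asymptotic decay of $\|\dot{\chi}_x(t)\|$ with the demiclosedness of $\partial h$.
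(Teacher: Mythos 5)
Your outline is correct and is essentially the paper's own route: the paper does not prove this lemma at all, but cites \cite{Brezis73} for existence, uniqueness and properties (i)--(iv) and \cite{Bruck75} for (v), which is precisely the Yosida-approximation/subdifferential-semigroup theory and asymptotic-convergence argument you reconstruct. One remark: since $\mathbb{X}$ is finite-dimensional, part (v) is less delicate than you anticipate --- boundedness from (iii), the energy estimate and lower semicontinuity show every cluster point lies in ${\rm crit}\,h$, and then the monotonicity in (iii) applied to such a cluster point $z$ (a single subsequence with $\|\chi_x(t_n)-z\|\to 0$ forces the nonincreasing function $t\mapsto\|\chi_x(t)-z\|$ to vanish in the limit) already gives convergence of the whole trajectory, so the full Opial/Bruck machinery is only needed for weak convergence in infinite dimensions.
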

 \begin{theorem}\label{subregular1-KL}
  Let $f\!:\mathbb{X}\to\overline{\mathbb{R}}$ be a proper lsc convex function.
  Consider an arbitrary $\overline{x}\in{\rm crit}f$. Then, for the following
  three statements:
  \begin{itemize}
   \item[(i)] the multifunction $\partial\!f$ is subregular at $\overline{x}$ for the origin;

   \item[(ii)] $f$ satisfies the quadratic growth at $\overline{x}$ as in \eqref{quadratic-growth};

   \item[(iii)] $f$ has the KL property of exponent $1/2$ at $\overline{x}$;
  \end{itemize}
  it holds that $(i)\Leftrightarrow(ii)\Rightarrow(iii)$. If, in addition,
  $f$ is continuous at $\overline{x}$, then $(ii)\Leftrightarrow(iii)$.
 \end{theorem}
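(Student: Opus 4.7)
The plan is to prove the four implications in the order $(ii)\!\Rightarrow\!(i)$, $(ii)\!\Rightarrow\!(iii)$, $(i)\!\Rightarrow\!(ii)$, and $(iii)\!\Rightarrow\!(ii)$, because the first two are purely algebraic consequences of the convex subgradient inequality, whereas the last two need the subgradient-flow machinery of Lemma~\ref{convex-curve}. Throughout I use that convexity makes ${\rm crit}f=\arg\min f$ closed and convex, so the metric projection $y=\Pi_{{\rm crit}f}(x)$ exists and satisfies $f(y)=f(\overline{x})$.

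The algebraic core is a single inequality: for any $x$ near $\overline{x}$ and any $v\in\partial f(x)$, the convex subgradient inequality between $x$ and $y=\Pi_{{\rm crit}f}(x)$ yields
\[
 f(x)-f(\overline{x})\;\le\;\|v\|\,{\rm dist}(x,{\rm crit}f).
\]
Taking the infimum over $v$ and combining with (ii) in the form $\nu\,{\rm dist}^2(x,{\rm crit}f)\le f(x)-f(\overline{x})$ produces ${\rm dist}(x,{\rm crit}f)\le(1/\nu){\rm dist}(0,\partial f(x))$, which is (i) with $\kappa=1/\nu$. Substituting instead ${\rm dist}(x,{\rm crit}f)\le\sqrt{(f(x)-f(\overline{x}))/\nu}$ and dividing by $\sqrt{f(x)-f(\overline{x})}$ gives ${\rm dist}(0,\partial f(x))\ge\sqrt{\nu(f(x)-f(\overline{x}))}$, which is (iii) with $\varphi(s)=(2/\sqrt{\nu})\sqrt{s}$.

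For $(i)\!\Rightarrow\!(ii)$ I fix $x\in\mathbb{B}(\overline{x},\varepsilon)$ with $\varepsilon$ the subregularity radius and follow the subgradient curve $\chi_x$; Lemma~\ref{convex-curve}(iii) applied at $z=\overline{x}$ keeps the trajectory inside $\mathbb{B}(\overline{x},\varepsilon)$, so subregularity is available throughout. Setting $u(t):=f(\chi_x(t))-f(\overline{x})$ and applying the displayed inequality at $\chi_x(t)$ with $v=\partial^0 f(\chi_x(t))$ yields $u(t)\le\kappa\|\partial^0 f(\chi_x(t))\|^2=-\kappa u'(t)$ via Lemma~\ref{convex-curve}(i)(ii); in particular $\|\dot\chi_x(t^+)\|\ge\sqrt{u(t)/\kappa}$. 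The chain-rule identity
\[
 -\frac{d}{dt}\sqrt{u(t)}\;=\;\frac{\|\dot\chi_x(t^+)\|^2}{2\sqrt{u(t)}}\;\ge\;\frac{\|\dot\chi_x(t^+)\|}{2\sqrt{\kappa}}
\]
integrates on $[0,\infty)$: Lemma~\ref{convex-curve}(iv)(v) gives $u(T)\to 0$ and $\chi_x(T)\to\widehat{x}\in{\rm crit}f$, so one recovers $\sqrt{f(x)-f(\overline{x})}\ge{\rm dist}(x,{\rm crit}f)/(2\sqrt{\kappa})$, i.e., (ii) with $\nu=1/(4\kappa)$.

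For $(iii)\!\Rightarrow\!(ii)$ the same curve scheme works once the subregularity-derived bound is replaced by the KL-derived one $\|\partial^0 f(\chi_x(t))\|\ge(2/c)\sqrt{u(t)}$, yielding $\sqrt{f(x)-f(\overline{x})}\ge{\rm dist}(x,{\rm crit}f)/c$. The main obstacle is ensuring that $\chi_x$ stays inside the KL region $\mathcal{U}\cap[f(\overline{x})<f<f(\overline{x})+\eta]$, and this is exactly where continuity enters: it supplies $\delta>0$ with $\mathbb{B}(\overline{x},\delta)\subseteq\mathcal{U}$ and $f<f(\overline{x})+\eta$ on this ball, Lemma~\ref{convex-curve}(iii) keeps $\chi_x(t)\in\mathbb{B}(\overline{x},\delta)$, Lemma~\ref{convex-curve}(iv) keeps $f(\chi_x(t))<f(\overline{x})+\eta$, and convex minimality keeps $f(\chi_x(t))\ge f(\overline{x})$. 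If at some time $u(t)=0$ the curve freezes at a critical point and (ii) is trivial from then on. Without continuity one cannot bound $f$ from above on a neighborhood of $\overline{x}$, so the trajectory could exit the KL region through the level set $\{f\ge f(\overline{x})+\eta\}$; this is why continuity is hypothesized only for this last implication.
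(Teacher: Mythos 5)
Your proposal is correct, and it overlaps with the paper on two of the four implications while being genuinely more self-contained on the other two. The implication $(ii)\Rightarrow(iii)$ is the same algebraic argument as in the paper (subgradient inequality at $x$ against $\Pi_{{\rm crit}f}(x)$, then divide by $\sqrt{f(x)-f(\overline{x})}$), and your $(iii)\Rightarrow(ii)$ is the same subgradient-flow argument the paper uses: continuity traps the trajectory in the KL region via Lemma~\ref{convex-curve}(iii)--(iv), and integrating $-\frac{d}{dt}\sqrt{u(t)}\ge \|\dot\chi_x(t^{+})\|/c$ with Lemma~\ref{convex-curve}(v) gives the quadratic growth. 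Where you diverge is $(i)\Leftrightarrow(ii)$: the paper simply invokes the known characterization of Artacho and Geoffroy (\cite[Theorem 3.3]{Artacho08}), whereas you prove $(ii)\Rightarrow(i)$ directly from the inequality $f(x)-f(\overline{x})\le\|v\|\,{\rm dist}(x,{\rm crit}f)$ and prove $(i)\Rightarrow(ii)$ by rerunning the same flow scheme with the subregularity bound $u(t)\le\kappa\|\partial^{0}f(\chi_x(t))\|^{2}$ in place of the KL bound; this buys a self-contained proof in which both ``hard'' directions are handled by one unified Lyapunov computation along the gradient flow, at the price of redoing a known result and relying on the flow machinery twice. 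Your constants and containment arguments (trajectory stays in the ball by Lemma~\ref{convex-curve}(iii), $f(\chi_x(t))\ge f(\overline{x})$ by convex minimality) check out; the only places where you are slightly informal --- the a.e.\ chain rule for $\sqrt{u(t)}$ and the treatment of times with $u(t)=0$, where the curve is stationary and one should just integrate up to the first such time --- are at exactly the same level of rigor as the paper's own proof, so I would not count them as gaps.
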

 \begin{proof}
  The equivalence $(i)\Leftrightarrow(ii)$ follows by \cite[Theorem 3.3]{Artacho08}.
  We next establish the implication $(ii)\Rightarrow(iii)$. Its proof is similar to
  that of \cite[Theorem 5(ii)]{Bolte17}, and we include it for completeness.
  By the quadratic growth $f$ at $\overline{x}$ as in \eqref{quadratic-growth},
  there exist $\varepsilon>0$ and $\nu>0$ such that
  \eqref{quadratic-growth} holds for all $x\in\mathbb{B}(\overline{x},\varepsilon)$.
  Pick up an arbitrary $x\in\mathbb{B}(\overline{x},\varepsilon)$.
  If $\partial\!f(x)=\emptyset$, the conclusion automatically holds.
  So, it suffices to consider the case $\partial\!f(x)\ne\emptyset$.
  From the convexity of $f$, for any $z\in{\rm dom}\,f$ and $\xi\in\partial\!f(x)$,
  $f(z)\ge f(x)+\langle\xi,z-x\rangle$. Then,
  \[
    f(x)-f(z)\le \inf_{\xi\in\partial f(x)}\|\xi\|\inf_{z\in{\rm crit}f}\|z-x\|
    \le {\rm dist}(x,(\partial f)^{-1}(0))\inf_{\xi\in\partial f(x)}\|\xi\|.
  \]
  By setting $z=\overline{x}$ in the last inequality and using the inequality \eqref{quadratic-growth},
  we obtain
  \[
    [f(x)-f(\overline{x})]^{1/2}
    \le \frac{1}{\sqrt{\nu}}\inf_{\xi\in\partial f(x)}\|\xi\|
    =\frac{1}{\sqrt{\nu}}{\rm dist}(0,\partial f(x)).
  \]
  This shows that $f$ has the KL property of exponent $1/2$ at $\overline{x}$.

  \medskip

  Now assume that $f$ is continuous on ${\rm crit}f$. We argue that
  $(iii)\Rightarrow(ii)$ holds. Since $f$ has the KL property of exponent $1/2$
  at $\overline{x}$ and $\overline{x}\in{\rm dom}\partial\!f$, there exist
  $\varepsilon_1>0,\eta>0$ and $c>0$ such that
  for all $x\in\mathbb{B}(\overline{x},\varepsilon_1)
  \cap[f(\overline{x})<f<f(\overline{x})+\eta]$,
  \[
    {\rm dist}(0,\partial\!f(x))\ge c\sqrt{f(x)-f(\overline{x})}.
  \]
  Since $f$ is continuous at $\overline{x}$,
  there exists $\varepsilon_2>0$ such that for all $x\in\mathbb{B}(\overline{x},\varepsilon_2)$,
  \[
    f(\overline{x})\le f(x)<f(\overline{x})+\eta.
  \]
  Set $\epsilon=\min(\varepsilon_1,\varepsilon_2)$. The last two inequalities imply that
  for all $x\in\mathbb{B}(\overline{x},\epsilon)\backslash{\rm crit}f$,
  \begin{equation}\label{KL-fineq}
    {\rm dist}(0,\partial\!f(x))\ge c\sqrt{f(x)-f(\overline{x})}.
  \end{equation}
  Since $f$ is a proper lsc and convex function, by Lemma \ref{convex-curve}
  for every $x\in{\rm dom}f$ there is a unique absolutely continuous
  subgradient curve $\chi_{x}\!:[0,+\infty)\to\mathbb{X}$ that satisfies
  the differential inclusion \eqref{inclusion}.
  Define $\omega(t):=\sqrt{f(\chi_{x}(t))-f^*}$ for $t\in[0,+\infty)$.
  Fix an arbitrary $x\in\mathbb{B}(\overline{x},\epsilon)\backslash{\rm crit}f$
  and consider the differential inclusion \eqref{inclusion}.
  By Lemma \ref{convex-curve}(iii) it follows that
  $\|\chi_{x}(t)-\overline{x}\|\le\|x-\overline{x}\|\le\epsilon$
  for any $t>0$. Fix an arbitrary $T>0$. Then,
  \[
    \frac{d\omega(t)}{dt}=\frac{\frac{d}{dt}f(\chi_{x}(t)}{2\sqrt{f(\chi_{x}(t))-f(\overline{x})}}
    =-\frac{\|\dot{\chi}_{x}(t)\|^2}{2\sqrt{f(\chi_{x}(t))-f(\overline{x})}}
    \le-\frac{c}{2}\|\dot{\chi}_{x}(t)\|\quad\forall t\in(0,T),
  \]
  where the second equality is due to Lemma \ref{convex-curve}(ii),
  and the inequality is using \eqref{KL-fineq} and the fact that
  $\dot{\chi}_{x}(t)\in-\partial f(\chi_{x}(t))$. The last inequality implies that
  \[
    \omega(T)-\omega(0)=\lim_{\epsilon\to 0^{+}}\int_{\epsilon}^{T}\frac{d}{dt}\omega(t)dt
    \le -\frac{c}{2}\int_{0}^{T}\|\dot{\chi}_{x}(t)\|dt
    \le -\frac{c}{2}{\rm dist}(\chi_{x}(T),\chi_{x}(0))
  \]
  where the last inequality is since the length of the curve connecting
  any two points is at least as long as the Euclidean distance between them.
  Thus, together with the nonnegativity of $\omega(T)$, it is immediate to obtain
  \[
    \sqrt{f(x)-f(\overline{x})}=\omega(0)\ge\frac{c}{2}{\rm dist}(\chi_{x}(T),\chi_{x}(0)).
  \]
  Notice that the function $\chi_{x}(\cdot)$ is continuous by \cite[Theorem 14]{Bolte10}.
  By taking the limiting $T\to\infty$ and using Lemma \ref{convex-curve}(v),
  it follows that
  \[
    \sqrt{f(x)-f(\overline{x})}=\omega(0)\ge \frac{c}{2}{\rm dist}(x,{\rm crit}f)=\frac{c}{2}{\rm dist}(x,(\partial f)^{-1}(0)).
  \]
  This, by the arbitrariness of $x\in\mathbb{B}(\overline{x},\epsilon)\backslash{\rm crit}f$,
  implies that part (ii) holds.
 \end{proof}
 \begin{remark}\label{remark-convex}
  By \cite[Theorem 3.1]{Dong09}, the assertion (i) is also equivalent to the subregularity
  of the proximal mapping $\mathcal{R}_f(x)\!:=x-{\rm prox}_f(x)$ at $\overline{x}$ for the origin.
  Thus, one may add the subregularity of $\mathcal{R}_f$ to the list of assertions.
  By Theorem \ref{subregular1-KL}, all convex functions in Lemma \ref{KL-convex}
  of Appendix are the KL function of exponent $1/2$.
 \end{remark}
 \subsection{The case that $f$ is nonconvex}\label{sec3.2}

  Denote by $\mathcal{S}_{f}^*$ the local minimum set of a proper function $f$.
  The following proposition states the relation among the KL property of
  exponent $1/2$ on $\mathcal{S}_{f}^*$, the quadratic growth of $f$
  as in \eqref{quadratic-growth} over $\mathcal{S}_{f}^*$,
  and the subregularity of $\partial\!f$ relative to this set for the origin.
 \begin{proposition}\label{KL-subregular}
  Let $f\!:\mathbb{X}\to\overline{\mathbb{R}}$ be a proper function.
  Consider an arbitrary $\overline{x}\in\mathcal{S}_{f}^*$ at which
  $f$ is continuous. If $f$ has the KL property of exponent $1/2$ at $\overline{x}$,
  then it satisfies the quadratic growth as in \eqref{quadratic-growth}
  at $\overline{x}$ and $\partial\!f$ is subregular at $\overline{x}$ for the origin.
 \end{proposition}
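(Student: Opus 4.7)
The plan is to first establish the quadratic growth \eqref{quadratic-growth} of $f$ at $\overline{x}$, and then obtain the subregularity of $\partial\!f$ as a short consequence. Using the KL property of exponent $1/2$, choose $\varepsilon_1,\eta,c>0$ such that
\[
 {\rm dist}(0,\partial\!f(x))\ge c\sqrt{f(x)-f(\overline{x})}
\]
for every $x\in\mathbb{B}(\overline{x},\varepsilon_1)\cap[f(\overline{x})<f<f(\overline{x})+\eta]$. Continuity of $f$ at $\overline{x}$ together with $\overline{x}\in\mathcal{S}_{f}^*$ supplies some $\varepsilon_2\in(0,\varepsilon_1]$ on which $f(\overline{x})\le f(x)<f(\overline{x})+\eta$ holds; any $x\in\mathbb{B}(\overline{x},\varepsilon_2)$ with $f(x)=f(\overline{x})$ is itself a local minimizer of $f$ and therefore lies in $(\partial\!f)^{-1}(0)$, so such points contribute nothing to the estimate we are after.

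The heart of the argument is a length-type inequality that replaces the subgradient-curve trick used in the convex Theorem \ref{subregular1-KL}. Following the scheme behind \cite[Corollary 2]{Ngai09} recalled in the introduction, I would, for each $x\in\mathbb{B}(\overline{x},\varepsilon_2/2)$ with $f(x)>f(\overline{x})$, build a descent sequence $\{x^k\}$ with $x^0=x$ via proximal-point iterations (or, equivalently, iterated use of Ekeland's variational principle), ensuring the sufficient-decrease inequality $f(x^{k+1})\le f(x^k)-\frac{\rho}{2}\|x^{k+1}-x^k\|^2$ and the relative-error inequality ${\rm dist}(0,\partial\!f(x^{k+1}))\le L\|x^{k+1}-x^k\|$. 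Feeding these into the KL inequality with desingularizer $\varphi(s)=(2/c)\sqrt{s}$ and telescoping by concavity in the usual Attouch--Bolte fashion produces the length bound
\[
 \sum_{k\ge 0}\|x^{k+1}-x^k\|\le\frac{2}{c}\sqrt{f(x)-f(\overline{x})},
\]
so the sequence lies in $\mathbb{B}(\overline{x},\varepsilon_2)$ for $x$ sufficiently close to $\overline{x}$ and converges to some $\widehat{x}\in(\partial\!f)^{-1}(0)$. Consequently
\[
 {\rm dist}(x,(\partial\!f)^{-1}(0))\le\|x-\widehat{x}\|\le\frac{2}{c}\sqrt{f(x)-f(\overline{x})},
\]
which is the quadratic growth \eqref{quadratic-growth} with $\nu=c^2/4$.

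Combining the KL inequality with this quadratic growth yields, on a suitable ball around $\overline{x}$,
\[
 {\rm dist}(0,\partial\!f(x))\ge c\sqrt{f(x)-f(\overline{x})}\ge c\sqrt{\nu}\,{\rm dist}(x,(\partial\!f)^{-1}(0)),
\]
so $\partial\!f$ is subregular at $\overline{x}$ for the origin with constant $\kappa=1/(c\sqrt{\nu})=2/c^2$. The main obstacle is the length estimate: in the nonconvex setting there is no subgradient curve to rely on, so one must construct a discrete descent sequence and verify that it stays within the neighborhood on which the KL inequality is active. This self-consistency is tight, but it closes precisely because the total length is bounded by $\sqrt{f(x)-f(\overline{x})}$, which is small whenever $x$ is close to $\overline{x}$ by continuity.
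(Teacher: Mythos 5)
Your proposal reaches the same two conclusions as the paper but by a genuinely different route for the quadratic-growth part. The paper localizes $f$ by setting $g:=f+\delta_{\mathbb{B}(\overline{x},\varepsilon'')}-f(\overline{x})$, transfers the KL inequality from $\partial\!f$ to $\widehat{\partial}g$ via the horizon-subdifferential qualification and the sum rule, and then invokes the error-bound result \cite[Corollary 2(ii)]{Ngai09} as a black box to obtain \eqref{quadratic-growth}; subregularity is then deduced by combining the KL inequality with the quadratic growth, exactly as in your last display. You instead reprove the needed level-set error bound from scratch: from each nearby point you run a localized proximal-point (Ekeland-type) descent sequence and telescope the KL inequality to bound the total length by a multiple of $\sqrt{f(x)-f(\overline{x})}$ --- in effect a discrete analogue of the subgradient-curve argument the paper uses in the convex case (Theorem \ref{subregular1-KL}). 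The paper's route buys brevity and avoids redoing convergence machinery; yours buys self-containedness. Three points need care if you write it out. First, the telescoped bound is not $\frac{2}{c}\sqrt{f(x)-f(\overline{x})}$ as claimed: the standard Attouch--Bolte estimate also carries the first prox step and the ratio of the relative-error constant $L$ to the sufficient-decrease constant $\rho$, so you only get $C\sqrt{f(x)-f(\overline{x})}$ with $C$ depending on $c,\rho,L$; this is of course enough for \eqref{quadratic-growth} with some $\nu>0$, but the stated constant $\nu=c^2/4$ is not what the argument yields. Second, the prox subproblems must be solved over a ball around $\overline{x}$, and the same induction that keeps the iterates inside the KL neighborhood must also certify that these minimizers are interior, so that Fermat's rule gives the relative-error inequality without a normal-cone term. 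Third, existence of the prox iterates and criticality of the limit point (via $f$-attentive closedness of $\partial\!f$, using $f(x^k)\to f(\widehat{x})$ extracted from prox minimality) require $f$ to be lsc near $\overline{x}$ --- a hypothesis not stated in the proposition, though it is equally implicit in the paper's appeal to \cite{Ngai09}, so this is a shared caveat rather than a defect of your approach alone.
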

 \begin{proof}
  In order to show that $f$ satisfies the quadratic growth as in \eqref{quadratic-growth}
  at $\overline{x}$, we first argue that the condition of
  \cite[Corollary 2(ii)]{Ngai09} holds at $\overline{x}$.
  Since $f$ has the KL property of exponent $1/2$ at $\overline{x}$,
  there exist $\varepsilon'>0, \eta>0$ and $c>0$ such that
  \begin{equation}\label{temp-KL12}
    {\rm dist}(0,\partial\!f(z))\ge c\sqrt{f(z)-f(\overline{x})}
  \end{equation}
  for all $z\in\mathbb{B}(\overline{x},\varepsilon')\cap[f(\overline{x})<f<f(\overline{x})+\eta]$.
  Since $f$ is continuous at $\overline{x}$ and $\overline{x}$ is a local minimizer of $f$,
  there exists $\varepsilon''>0$ such that for all $z\in\mathbb{B}(\overline{x},\varepsilon'')$,
  \begin{equation}\label{equa-continuous}
    f(\overline{x})\le f(z)\le f(\overline{x})+\eta/2.
  \end{equation}
  Define $S_g:=\big\{z\in\mathbb{X}\ |\ g(z):=f(z)+\delta_{\mathbb{B}(\overline{x},\varepsilon'')}(z)-f(\overline{x})\le 0\big\}$.
  Set $\epsilon=\min(\varepsilon',\varepsilon'')/2$.
  Fix an arbitrary $x\in\mathbb{B}(\overline{x},\epsilon)\backslash S_g$.
  From \eqref{temp-KL12}, we have ${\rm dist}(0,\partial\!f(x))\ge c\sqrt{f(x)-f(\overline{x})}$.
  Take arbitrary $v^1\in\partial^{\infty}\!f(x)$ and $v^2\in\partial^{\infty}\delta_{\mathbb{B}(\overline{x},\varepsilon'')}(x)$
  such that $v^1+v^2=0$. From \cite[Proposition 8.12]{RW98}, it follows that
  \(
    \partial^{\infty}\delta_{\mathbb{B}(\overline{x},\varepsilon'')}(x)
    =\mathcal{N}_{\mathbb{B}(\overline{x},\varepsilon'')}(x)=\{0\}
  \)
  where the second equality is due to $x\in{\rm int}\,\mathbb{B}(\overline{x},\varepsilon'')$.
  This means that $v^2=v^1=0$. By \cite[Corollary 10.9]{RW98},
  \[
    \widehat{\partial}g(x)\subseteq\partial g(x)
    \subseteq \partial\!f(x)+\mathcal{N}_{\mathbb{B}(\overline{x},\varepsilon'')}(x)
    =\partial\!f(x).
  \]
  This, together with  ${\rm dist}(0,\partial\!f(x))\ge c\sqrt{f(x)-f(\overline{x})}$,
  immediately implies that
  \[
   {\rm dist}(0,\widehat{\partial}g(x))\ge c\sqrt{f(x)-f(\overline{x})}=c\sqrt{g(x)}.
  \]
  By the arbitrariness of $x$ in $\mathbb{B}(\overline{x},\epsilon)\backslash S_g$,
  the condition of \cite[Corollary 2(ii)]{Ngai09} is satisfied
  with $\gamma=1/2$, $a=c/2$ and $\varepsilon=\epsilon$. Applying the conclusion of
  \cite[Corollary 2(ii)]{Ngai09} yields
  \[
    \max(g(x),0)\ge a^2{\rm dist}^2(x,S_g)\quad\ {\rm for\ all}\
    x\in\mathbb{B}(\overline{x},\epsilon).
  \]
 Notice that each point of $S_g$ is a local minimizer of $f$. Clearly,
 $S_g\subset{\rm crit}f=(\partial\!f)^{-1}(0)$. Then, the last inequality
 shows that $f$ has the quadratic growth as in \eqref{quadratic-growth}
 at $\overline{x}$, i.e.,
 \begin{equation}\label{equa1-QG}
   f(x)\ge f(\overline{x})+\frac{c^2}{4}{\rm dist}^2(x,(\partial\!f)^{-1}(0))
   \quad\ \forall x\in\mathbb{B}(\overline{x},\epsilon/2)
 \end{equation}

 Next we show that $\partial\!f$ is subregular at $\overline{x}$ for the origin.
 Fix an arbitrary $x\in\mathbb{B}(\overline{x},\epsilon/2)$.
 If $\partial\!f(x)=\emptyset$, then the following inequality holds for any $\kappa>0$:
 \begin{equation}\label{aim-ineq}
    {\rm dist}(x, (\partial\!f)^{-1}(0))\le\kappa{\rm dist}(0,\partial\!f(x)).
 \end{equation}
 Now assume that $\partial\!f(x)\ne\emptyset$.
 If $x\in[f(\overline{x})<f<f(\overline{x})+\eta]$,
 from \eqref{temp-KL12} and \eqref{equa1-QG} we have
 \[
   {\rm dist}(0,\partial\!f(x))\ge \frac{c^2}{2}{\rm dist}(x,(\partial\!f)^{-1}(0)),
 \]
 i.e., the inequality \eqref{aim-ineq} holds with $\kappa=\sqrt{2}/c$.
 When $x\notin[f(\overline{x})<f<f(\overline{x})+\eta]$, the inequality \eqref{equa-continuous}
 implies that $f(x)=f(\overline{x})$. Together with \eqref{equa1-QG},
 ${\rm dist}^2(x,(\partial\!f)^{-1}(0))=0$, and the inequality \eqref{aim-ineq} holds
 for any $\kappa>0$. To sum up, the inequality \eqref{aim-ineq} holds
 for any $x\in\mathbb{B}(\overline{x},\epsilon/2)$. Consequently, $\partial\!f$
 is metrically subregular at $\overline{x}$ for the origin.
 \end{proof}
 \begin{remark}
  The continuity of $f$ at $\overline{x}$ is necessary for the conclusion
  of Proposition \ref{KL-subregular}. For example, consider
  \(
    f(x):=\left\{\begin{array}{cl}
      x^4 &{\rm if}\ x\neq 0;\\
      -1 & {\rm if}\ x=0
   \end{array}\right.
  \)
  for $x\in\mathbb{R}$. Clearly, $\overline{x}=0$ is a local minimizer of $f$
  but $f$ is not continuous at $\overline{x}$. A simple calculation yields
  $\partial\!f(0)\!=\mathbb{R}$ and $\partial\!f(x)\!=\!\{4x^3\}$ if $x\ne 0$.
  One may check that $\partial\!f$ is not subregular at $\overline{x}$
  though $f$ satisfies the quadratic growth condition as in \eqref{quadratic-growth}
  and has the KL property of exponent $1/2$ at $\overline{x}$.
 \end{remark}

 Observe that the quadratic growth of a proper $f$ at $\overline{x}$ as in \eqref{quadratic-growth}
 implies that $\overline{x}$ is a local minimum. Then, from Proposition \ref{KL-subregular},
 we obtain the following conclusion.
 \begin{corollary}\label{corollary-subregular}
  Let $f\!:\mathbb{X}\to\overline{\mathbb{R}}$ be a proper function.
  Consider an arbitrary $\overline{x}\in{\rm crit}f$ at which
  $f$ is continuous and satisfies the quadratic growth as in \eqref{quadratic-growth}.
  If $f$ has the KL property of exponent $1/2$ at $\overline{x}$,
  then $\partial\!f$ is subregular at $\overline{x}$ for the origin.
 \end{corollary}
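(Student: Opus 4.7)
The plan is to obtain the corollary as an immediate consequence of Proposition \ref{KL-subregular}. The critical observation---also flagged in the sentence preceding the corollary---is that the quadratic growth condition \eqref{quadratic-growth} at $\overline{x}$ by itself forces $\overline{x}$ to be a local minimizer of $f$. Indeed, since $\mathrm{dist}^2(x,(\partial f)^{-1}(0))\ge 0$, the inequality $f(x)-f(\overline{x})\ge \nu\,\mathrm{dist}^2(x,(\partial f)^{-1}(0))$ valid on some ball $\mathbb{B}(\overline{x},\varepsilon)$ immediately yields $f(x)\ge f(\overline{x})$ throughout that ball. Hence $\overline{x}\in\mathcal{S}_f^*$.

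With $\overline{x}$ now identified as a member of the local-minimum set, the remaining hypotheses---continuity of $f$ at $\overline{x}$ and the KL property of exponent $1/2$ at $\overline{x}$---are exactly those required by Proposition \ref{KL-subregular}. I would then invoke that proposition to conclude that $\partial f$ is subregular at $\overline{x}$ for the origin, which is the claimed statement. (Note that the quadratic-growth output produced inside that proposition is consistent with---indeed, already supplied by---the corollary's hypothesis, so no circularity arises.)

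Since the argument is essentially a one-line reduction, there is no real obstacle to surmount. The only conceptual point worth noting is that Proposition \ref{KL-subregular} produces the quadratic growth as a \emph{conclusion} rather than taking it as an assumption; the corollary should thus be read as recasting the local-minimality hypothesis of that proposition into the equivalent (and here directly assumed) form of quadratic growth at a critical point. If, alternatively, one preferred a self-contained derivation, the same conclusion could be read off by chaining the KL estimate $\mathrm{dist}(0,\partial f(x))\ge c\sqrt{f(x)-f(\overline{x})}$ with \eqref{quadratic-growth} on a common neighborhood (supplied by the continuity of $f$), treating $\partial f(x)=\emptyset$ and $f(x)=f(\overline{x})$ as trivial cases; but this merely reproduces the content of Proposition \ref{KL-subregular}.
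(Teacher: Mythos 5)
Your argument is correct and is exactly the paper's own: the text preceding the corollary observes that the quadratic growth \eqref{quadratic-growth} forces $\overline{x}$ to be a local minimizer (since the right-hand side is nonnegative), and then the corollary is obtained as an immediate consequence of Proposition \ref{KL-subregular}. Nothing further is needed.
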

 \begin{remark}\label{remark1-nonconvex}
  As \cite[Remark 2.2(iii)]{Artacho14} pointed out, for a proper lsc
  $f\!:\mathbb{X}\to\overline{\mathbb{R}}$ and an arbitrary local minimum $x^*$
  of $f$, the subregularity of $\partial\!f$ at $x^*$ for the origin implies
  the quadratic growth of $f$ at $x^*$ as in \eqref{quadratic-growth}.
  This result was also proved in \cite[Theorem 4.3]{Drusvyatskiy15}.
  In fact, for those $\overline{x}\in{\rm crit}f$ at which there exists
  $\delta>0$ such that $f(y)=f(\overline{x})$ for all
  $y\in\mathbb{B}(\overline{x},\delta)\cap{\rm crit}f$,
  by following the same arguments as in \cite[Theorem 3.3]{Artacho08},
  the subregularity of $\partial\!f$ at $\overline{x}$ for the origin
  also implies its quadratic growth at $\overline{x}$ as in \eqref{quadratic-growth}.
  So, the quadratic growth of $f$ at $\overline{x}$ is
  necessary for the result of Corollary \ref{corollary-subregular}.
  \end{remark}

 For a general $f$, it is not clear whether the converse of Corollary
 \ref{corollary-subregular} holds or not, but when $f$ is uniformly prox-regular
 at $\overline{x}$, we can achieve it under a suitable assumption.
 \begin{proposition}\label{KL-Uprox}
  Let $f\!:\mathbb{X}\to\overline{\mathbb{R}}$ be a proper function. Consider
  an arbitrary $\overline{x}\in{\rm crit}f$ for which there exists $\delta>0$
  such that $f(y)\le f(\overline{x})$ for all $y\in\mathbb{B}(\overline{x},\delta)\cap{\rm crit}f$.
  If $f$ is uniformly prox-regular at $\overline{x}$ and $\partial\!f$
  is subregular at $\overline{x}$ for the origin,
  then $f$ has the KL property of exponent ${1}/{2}$ at $\overline{x}$.
 \end{proposition}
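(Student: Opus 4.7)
My plan is to verify the KL inequality of exponent $1/2$ directly: produce $\varepsilon_1>0$, $\eta>0$, and $c>0$ so that ${\rm dist}(0,\partial\!f(x))\ge c\sqrt{f(x)-f(\overline{x})}$ holds for every $x\in\mathbb{B}(\overline{x},\varepsilon_1)$ with $f(\overline{x})<f(x)<f(\overline{x})+\eta$. First, I would unpack the three pieces of data supplied by the hypotheses: the constants $\delta_0>0$ and $\rho\ge 0$ from the uniform prox-regularity of $f$ at $\overline{x}$ (Definition \ref{Uprox-regular}), the constants $\varepsilon_0>0$ and $\kappa>0$ witnessing the subregularity of $\partial\!f$ at $\overline{x}$ for the origin, and the constant $\delta>0$ from the standing hypothesis $f(y)\le f(\overline{x})$ on $\mathbb{B}(\overline{x},\delta)\cap{\rm crit}f$. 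I would then set $\varepsilon_1:=\tfrac{1}{2}\min(\varepsilon_0,\delta,\delta_0)$ and $\eta:=\delta_0$.

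For a fixed $x$ in this window with $\partial\!f(x)\ne\emptyset$ (the case $\partial\!f(x)=\emptyset$ is vacuous), I would pick a minimum-norm element $v\in\partial\!f(x)$ (attained since $\partial\!f(x)$ is closed) and select any $y_x\in\Pi_{{\rm crit}f}(x)$, which is nonempty because ${\rm crit}f=(\partial\!f)^{-1}(0)$ is closed. The key geometric observation that links the two regularities is that $\overline{x}$ itself is critical, so
\[
  \|y_x-x\|={\rm dist}(x,{\rm crit}f)\le\|x-\overline{x}\|\le\varepsilon_1,
\]
forcing $\|y_x-\overline{x}\|\le 2\varepsilon_1\le\min(\delta,\delta_0)$. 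From here it is routine to verify that both $x$ and $y_x$ lie in $\mathbb{B}(\overline{x},\delta_0)$, that $f(x)<f(\overline{x})+\delta_0$, and that the standing hypothesis yields $f(y_x)\le f(\overline{x})$, so the uniform prox-regularity inequality legitimately applies to the pair $(x,y_x)$ with subgradient $v$.

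Combining that inequality with the Cauchy-Schwarz estimate and the subregularity bound $\|x-y_x\|\le\kappa\|v\|$, a short rearrangement produces
\[
  f(x)-f(\overline{x})\le f(x)-f(y_x)\le\|v\|\|x-y_x\|+\tfrac{\rho}{2}\|x-y_x\|^2\le\Bigl(\kappa+\tfrac{\rho\kappa^2}{2}\Bigr){\rm dist}^2(0,\partial\!f(x)),
\]
which rearranges to the target KL estimate with $c=(\kappa+\rho\kappa^2/2)^{-1/2}$. The main obstacle I anticipate is purely one of bookkeeping the neighborhoods: one must be sure that the witness $y_x$ delivered by subregularity already lies inside the neighborhood demanded by prox-regularity, without which the prox-regularity inequality cannot be invoked. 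This is exactly why I lean on the inequality $\|x-y_x\|\le\|x-\overline{x}\|$, a free consequence of $\overline{x}\in{\rm crit}f$, so that a single radius $\varepsilon_1=\tfrac{1}{2}\min(\varepsilon_0,\delta,\delta_0)$ simultaneously controls both regimes. The standing hypothesis $f(y)\le f(\overline{x})$ on nearby critical points plays a complementary and indispensable role: without it one could only bound $f(x)-f(y_x)$, and for a generic prox-regular $f$ the value $f(y_x)$ could strictly exceed $f(\overline{x})$, breaking the chain $f(x)-f(\overline{x})\le f(x)-f(y_x)$ on which the whole argument rests.
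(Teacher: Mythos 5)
Your proposal is correct and follows essentially the same route as the paper's proof: project onto ${\rm crit}f$, use $\|x-y_x\|\le\|x-\overline{x}\|$ (since $\overline{x}\in{\rm crit}f$) to keep the projection inside the prox-regularity and separation neighborhoods, apply the prox-regularity inequality together with the separation hypothesis $f(y_x)\le f(\overline{x})$, and then substitute the subregularity bound to obtain $f(x)-f(\overline{x})\le(\kappa+\rho\kappa^2/2)\,{\rm dist}^2(0,\partial\!f(x))$. The only differences are cosmetic (minimum-norm subgradient instead of an infimum over $\partial\!f(x)$, and a slightly different choice of $\eta$), so nothing further is needed.
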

 \begin{proof}
  Since the function $f$ is uniformly prox-regular at $\overline{x}$, there exist $\delta'>0$
  and $\rho>0$ such that for all $x,y\in\mathbb{B}(\overline{x},\delta')$
  and all $v\in\partial\!f(x)$ with $f(x)\le f(\overline{x})+\delta'$,
  \begin{equation}\label{Uprox}
    f(y)\ge f(x)+\langle v,y-x\rangle-\frac{\rho}{2}\|y-x\|^2.
  \end{equation}
  Since $\partial\!f$ is subregular at $\overline{x}$ for the origin,
  there exist $\varepsilon'>0$ and $\kappa>0$ such that the inequality \eqref{aim-ineq}
  holds for all $x\in\mathbb{B}(\overline{x},\varepsilon')$.
  Set $\epsilon=\min(\delta,\varepsilon',\delta')/2$. Consider an arbitrary $z\in\mathbb{B}(\overline{x},\epsilon)\cap[f(\overline{x})<f<f(\overline{x})+\epsilon]$.
  When $\partial\!f(z)=\emptyset$, for any $c>0$, we have
  \begin{equation}\label{tempKL-ineq}
    {\rm dist}(0,\partial\!f(z))\ge c\sqrt{f(z)-f(\overline{x})}.
  \end{equation}
  Now assume that $\partial\!f(z)\ne\emptyset$. Take an arbitrary $\xi\in\partial\!f(z)$
  and let $u\in\Pi_{{\rm crit}f}(z)$. Then, $\|u-\overline{x}\|\le
  \|u-z\|+\|z-\overline{x}\|\le 2\|z-\overline{x}\|\le\delta'$.
  By invoking \eqref{Uprox} with $y=u$ and $x=z$,
  \begin{equation*}
    f(u)\ge f(z)+\langle\xi,u-z\rangle -\frac{\rho}{2}\|u-z\|^2.
  \end{equation*}
  Notice that $f(u)\le f(\overline{x})$ since $\|u-\overline{x}\|\le2\|z-\overline{x}\|\le\delta$.
  Together with the last inequality,
  \begin{align*}
    f(z)-f(\overline{x})&\le f(z)-f(u)
    \le\inf_{\xi\in\partial\!f(z)}\langle\xi,z-u\rangle+\frac{\rho}{2}\|z-u\|^2\\
    &\le\inf_{\xi\in\partial\!f(z)}\|\xi\|\|z-u\|+\frac{\rho}{2}\|z-u\|^2\\
    &={\rm dist}(0,\partial\!f(x)){\rm dist}(z,(\partial\!f)^{-1}(0))+\frac{\rho}{2}{\rm dist}^2(z,(\partial\!f)^{-1}(0)).
  \end{align*}
  Since ${\rm dist}(z,(\partial\!f)^{-1}(0))\le\kappa{\rm dist}(0,\partial\!f(z))$
  by \eqref{aim-ineq} with $x=z$, the last inequality implies
  \[
   f(z)-f(\overline{x})\le (\kappa+{\rho\kappa^2}/{2}){\rm dist}^2(0,\partial\!f(z)).
  \]
  Along with $f(z)>f(\overline{x})$, we have that $f$ has the KL property of exponent $1/2$ at $\overline{x}$.
 \end{proof}

 From the discussions after Definition \ref{Uprox-regular}, when $f$ is
 locally Lipschitz continuous at $\overline{x}$, the uniform prox-regularity
 of $f$ at $\overline{x}$ in Proposition \ref{KL-Uprox} can be replaced by
 the prox-regularity of $f$ at $\overline{x}$, and consequently,
 by the pln of $f$ at $\overline{x}$. The following assumption is about
 the separation of stationary values, which is a little weaker than
 \cite[Assumption 4.1]{LiPong18} or \cite[Assumption B]{Luo93}. Clearly,
 it holds at a local maximizer of $f$. By \cite[Lemma 3.1]{Luo92},
 it also holds for the quadratic functions on the polyhedral set.
 \begin{assumption}\label{assump1}
  For each $\overline{x}\in{\rm crit}f$, there exists $\delta>0$ such that
  $f(y)\le f(\overline{x})$ for all $y\in\mathbb{B}(\overline{x},\delta)\cap{\rm crit}f$.
 \end{assumption}

 From Proposition \ref{KL-subregular} and \ref{KL-Uprox} and
 Corollary \ref{KL-subregular}, we obtain the following conclusion.
 \begin{theorem}\label{theorem-nonconvex}
  For a proper uniformly prox-regular $f\!:\mathbb{X}\to\overline{\mathbb{R}}$,
  the following results hold.
  \begin{itemize}
    \item[(i)] If $f$ is continuous on the local minimum set,
          its KL property of exponent $1/2$ on this set is equivalent to
          the subregularity of $\partial\!f$ relative to this set for the origin;

   \item[(ii)] Under Assumption \ref{assump1}, the subregularity of $\partial\!f$
               relative to ${\rm crit}f$ for the origin implies that $f$ is a KL function
               of exponent $1/2$. The converse also holds if $f$ is continuous on ${\rm crit}f$
               and satisfies the quadratic growth as in \eqref{quadratic-growth} on ${\rm crit}f$.
  \end{itemize}
 \end{theorem}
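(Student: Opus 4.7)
The plan is to assemble Theorem~\ref{theorem-nonconvex} by applying the previously established pointwise results---Propositions~\ref{KL-subregular} and~\ref{KL-Uprox} together with Corollary~\ref{corollary-subregular}---at each reference point of the relevant set. For the forward direction of part~(i), fix any $\overline{x}\in\mathcal{S}_{f}^*$: this point is a local minimizer at which $f$ is continuous by hypothesis and has the KL property of exponent $1/2$, so Proposition~\ref{KL-subregular} delivers the subregularity of $\partial f$ at $\overline{x}$ for the origin. Since $\overline{x}$ was arbitrary, $\partial f$ is subregular relative to $\mathcal{S}_{f}^*$.

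For the reverse direction of part~(i), again fix $\overline{x}\in\mathcal{S}_{f}^*$ with the intention of invoking Proposition~\ref{KL-Uprox}: the conditions $\overline{x}\in{\rm crit}f$, uniform prox-regularity at $\overline{x}$, and subregularity at $\overline{x}$ are already in place. The remaining ingredient is the existence of $\delta>0$ with $f(y)\le f(\overline{x})$ for every $y\in\mathbb{B}(\overline{x},\delta)\cap{\rm crit}f$. To secure it, I would combine the local-minimum property $f(y)\ge f(\overline{x})$ with inequality~\eqref{ineq-prox} applied at such $y$ with $v=0\in\partial f(y)$, which yields $f(y)\le f(\overline{x})+(\rho/2)\|y-\overline{x}\|^2$, so that $f(y)$ matches $f(\overline{x})$ to second order. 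Threading this residual through the proof of Proposition~\ref{KL-Uprox}, while using the triangle-inequality bound $\|u-\overline{x}\|\le 2\|z-\overline{x}\|$ for the projection $u\in\Pi_{{\rm crit}f}(z)$ and the subregularity-based control of $\|z-u\|$, should recover the KL property of exponent $1/2$ at $\overline{x}$.

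Part~(ii) proceeds in the same spirit. Under Assumption~\ref{assump1}, every $\overline{x}\in{\rm crit}f$ automatically satisfies the one-sided condition $f(y)\le f(\overline{x})$ required by Proposition~\ref{KL-Uprox}; combined with uniform prox-regularity and the assumed subregularity, the proposition supplies KL of exponent $1/2$ at each $\overline{x}\in{\rm crit}f$, and Remark~\ref{KL-remark} then promotes this to $f$ being a KL function of exponent $1/2$. For the converse in part~(ii), continuity of $f$ on ${\rm crit}f$, the quadratic growth~\eqref{quadratic-growth} on ${\rm crit}f$, and the KL property of exponent $1/2$ at every $\overline{x}\in{\rm crit}f$ are precisely the hypotheses of Corollary~\ref{corollary-subregular}, which then delivers subregularity of $\partial f$ at $\overline{x}$. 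The main obstacle I foresee is the reverse direction of part~(i): reconciling the natural inequality $f(y)\ge f(\overline{x})$ at a local minimum with the opposite one-sided bound built into Proposition~\ref{KL-Uprox} requires carefully tracking how the second-order residual interacts with the subregularity modulus, and this is where the bulk of the technical work concentrates.
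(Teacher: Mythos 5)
Your assembly of part (ii) and of the forward half of part (i) is exactly the paper's route (the paper itself gives no more than a citation of Propositions \ref{KL-subregular}, \ref{KL-Uprox} and Corollary \ref{corollary-subregular}), and those pieces are fine: Assumption \ref{assump1} hands Proposition \ref{KL-Uprox} its hypothesis at every critical point, Remark \ref{KL-remark} upgrades pointwise KL to the KL-function property, and the converse in (ii) is verbatim Corollary \ref{corollary-subregular}. You have also correctly located the one place where the citation-only proof is not automatic, namely the reverse half of (i): Proposition \ref{KL-Uprox} needs $f(y)\le f(\overline{x})$ for all critical $y$ near $\overline{x}$, whereas a local minimum only gives $f(y)\ge f(\overline{x})$.

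However, your proposed repair does not close this gap. Applying \eqref{ineq-prox} at a nearby critical point $y$ with $v=0$ (which, incidentally, already requires knowing $f(y)\le f(\overline{x})+\delta$, not guaranteed since $f$ is only assumed continuous on $\mathcal{S}_f^*$) yields only $f(y)\le f(\overline{x})+\frac{\rho}{2}\|y-\overline{x}\|^2$. Threading this through the proof of Proposition \ref{KL-Uprox}, the term that must be absorbed is $f(u)-f(\overline{x})$ for $u\in\Pi_{{\rm crit}f}(z)$, and your bound gives $f(u)-f(\overline{x})\le\frac{\rho}{2}\|u-\overline{x}\|^2\le 2\rho\|z-\overline{x}\|^2$, a quantity controlled by the distance of $z$ to the reference point, not by ${\rm dist}(0,\partial f(z))$ or by $\|z-u\|={\rm dist}(z,{\rm crit}f)$, which is all that subregularity controls; so it cannot be folded into the target inequality ${\rm dist}(0,\partial f(z))\ge c\sqrt{f(z)-f(\overline{x})}$. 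The obstruction is in fact structural: if critical points $y$ with $f(y)>f(\overline{x})$ accumulate at $\overline{x}$, the KL inequality at $\overline{x}$ fails outright at those points (the left-hand side is zero there), and none of the hypotheses you invoke excludes this --- subregularity is vacuous at critical points, and uniform prox-regularity plus a (even global) minimum at $\overline{x}$ is compatible with it, as $f(x)=x^4(2+\sin(1/x))$, $f(0)=0$, at $\overline{x}=0$ illustrates. So no second-order bookkeeping of the residual can succeed; what is needed is precisely a separation-of-critical-values condition of the type assumed in Proposition \ref{KL-Uprox} and Assumption \ref{assump1}, and the reverse direction of (i) should either invoke such a condition or be accompanied by an argument (absent both from your sketch and from the paper's one-line proof) showing why it is available at local minima in the intended setting.
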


  When $f$ is a proper semiconvex function, it is easy to check that the subregularity
  of $\partial\!f$ relative to ${\rm crit}f$ is equivalent to the existence
  of a constant $\gamma\ge 0$ such that $\mathcal{R}_{f_{\gamma}}(x):=x-{\rm prox}_{f_{\gamma}}(x+\gamma x)$
  is subregular relative to ${\rm crit}f$, where $f_{\gamma}(\cdot)=f(\cdot)+\frac{\gamma}{2}\|\cdot\|^2$.
  In this case, the subregularity of $f$ relative to ${\rm crit}f$ can be replaced by that of
  $\mathcal{R}_{f_{\gamma}}(x)$.

  \medskip

  Figure 1 and 2 below summarize the relations among
  the KL property of exponent $1/2$, the quadratic growth, and the subregularity
  of subdifferential for a proper $f\!:\mathbb{X}\to\overline{\mathbb{R}}$ at a
  local minimum $x^*$ and a critical point $\overline{x}$, respectively.

   \noindent
 \vspace{-0.5cm}
 \begin{figure}[htb]
   \includegraphics[width=0.95\textwidth]{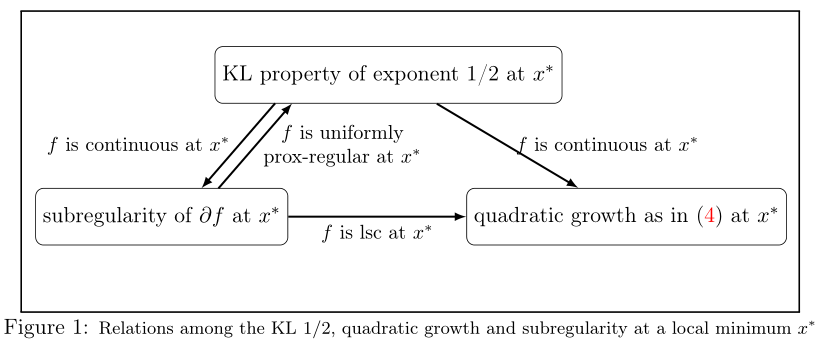}
 \end{figure}

\noindent
 \vspace{-0.5cm}
 \begin{figure}[htb]
   \includegraphics[width=0.95\textwidth]{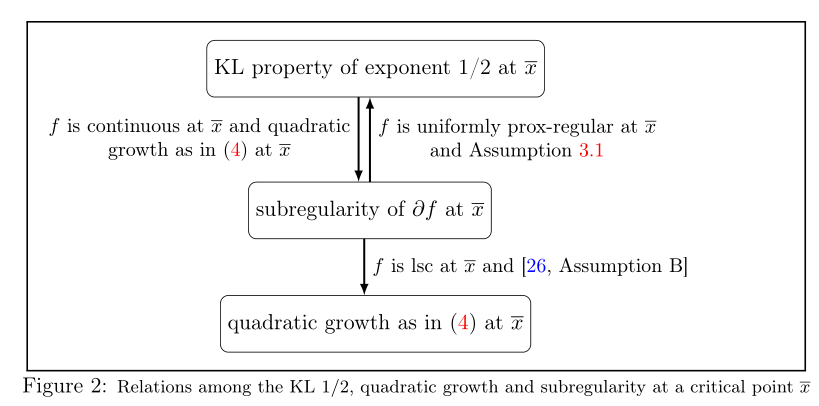}
 \end{figure}

 \subsection{The case that $f$ is structured nonconvex}\label{sec3.3}

  In this part we concentrate on the structured nonconvex $f$ that has
  the following form
  \begin{equation}\label{composite1}
   f(x):=g(x)+h(x),
  \end{equation}
  where $g\!:\mathbb{X}\to(-\infty,+\infty]$ is a proper function with
  an open domain and is smooth on ${\rm dom}g$,
  and $h\!:\mathbb{X}\to(-\infty,+\infty]$ is a proper function with
  ${\rm dom}h\cap{\rm dom}\,g\ne\emptyset$. For this class of structured
  nonconvex functions, we have the following conclusion.
 \begin{theorem}\label{theo-composite}
  For the function $f$ in \eqref{composite1} with a uniformly prox-regular $h$,
  consider an $\overline{x}\in{\rm crit}f$ for which there exists
  $\delta>0$ such that $f(y)\le f(\overline{x})$ for all
  $y\in\mathbb{B}(\overline{x},\delta)\cap{\rm crit}f$.
  Then the subregularity of $\partial\!f$ at $\overline{x}$ for $0$
  implies the KL property of exponent $1/2$ at $\overline{x}$.
 \end{theorem}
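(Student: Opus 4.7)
The strategy is to mimic the proof of Proposition \ref{KL-Uprox}, except that now $f$ itself is not assumed to be uniformly prox-regular, so the single prox-regular estimate used there must be replaced by a composite estimate coming from combining the smoothness of $g$ with the uniform prox-regularity of $h$. Once such a composite quadratic lower bound for $f$ is available, the remainder of the argument (distance to the critical set, subregularity, and the hypothesis $f(y)\le f(\overline{x})$ on $\mathbb{B}(\overline{x},\delta)\cap{\rm crit}f$) can be reused almost verbatim.

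First I would produce the required lower bound for $f$. Because $g$ is smooth on its open domain and $\overline{x}\in{\rm dom}\,g$, the gradient $\nabla g$ is locally Lipschitz near $\overline{x}$ with some modulus $L\ge 0$, giving the two-sided descent inequality
\[
 g(y)\ge g(x)+\langle\nabla g(x),y-x\rangle-\tfrac{L}{2}\|y-x\|^2
\]
for all $x,y$ in a small ball $\mathbb{B}(\overline{x},\delta_1)$. Since $h$ is uniformly prox-regular at $\overline{x}$, there are $\delta_2>0$ and $\rho\ge 0$ such that
\[
 h(y)\ge h(x)+\langle v,y-x\rangle-\tfrac{\rho}{2}\|y-x\|^2
\]
for all $x,y\in\mathbb{B}(\overline{x},\delta_2)$ and $v\in\partial h(x)$ with $h(x)\le h(\overline{x})+\delta_2$. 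The smoothness of $g$ on its open domain also gives the calculus rule $\partial f(x)=\nabla g(x)+\partial h(x)$ in a neighborhood of $\overline{x}$; so writing an arbitrary $w\in\partial f(x)$ as $w=\nabla g(x)+v$ with $v\in\partial h(x)$ and adding the two bounds yields
\[
 f(y)\ge f(x)+\langle w,y-x\rangle-\tfrac{L+\rho}{2}\|y-x\|^2
\]
for all $x,y\in\mathbb{B}(\overline{x},\delta_0)$ and $w\in\partial f(x)$ with $f(x)\le f(\overline{x})+\delta_0$, provided $\delta_0>0$ is small enough (here we also use the continuity of $g$ at $\overline{x}$ to transfer the side condition on $f(x)$ into the required condition on $h(x)$).

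With this inequality in hand, the proof of Proposition \ref{KL-Uprox} transplants directly. By the subregularity of $\partial f$ at $\overline{x}$ for the origin there exist $\varepsilon'>0$ and $\kappa>0$ such that ${\rm dist}(x,(\partial f)^{-1}(0))\le\kappa\,{\rm dist}(0,\partial f(x))$ on $\mathbb{B}(\overline{x},\varepsilon')$. Set $\epsilon:=\tfrac{1}{2}\min(\delta,\delta_0,\varepsilon')$, and take an arbitrary $z\in\mathbb{B}(\overline{x},\epsilon)\cap[f(\overline{x})<f<f(\overline{x})+\epsilon]$ with $\partial f(z)\neq\emptyset$ (otherwise the KL inequality is trivial). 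Let $u\in\Pi_{{\rm crit}f}(z)$, so that $\|u-\overline{x}\|\le 2\|z-\overline{x}\|\le\delta$, whence the hypothesis on ${\rm crit}f$ gives $f(u)\le f(\overline{x})$. Applying the composite inequality with $x=z$ and $y=u$ and taking the infimum over $w\in\partial f(z)$ yields
\[
 f(z)-f(\overline{x})\le f(z)-f(u)\le{\rm dist}(0,\partial f(z))\|z-u\|+\tfrac{L+\rho}{2}\|z-u\|^2,
\]
and combining with $\|z-u\|={\rm dist}(z,(\partial f)^{-1}(0))\le\kappa\,{\rm dist}(0,\partial f(z))$ gives
\[
 f(z)-f(\overline{x})\le\Bigl(\kappa+\tfrac{(L+\rho)\kappa^2}{2}\Bigr){\rm dist}^2(0,\partial f(z)),
\]
which is exactly the KL inequality of exponent $1/2$ at $\overline{x}$.

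The main obstacle, and the only genuinely new ingredient beyond Proposition \ref{KL-Uprox}, is the composite quadratic lower bound in the first step: one must verify that the side condition $h(x)\le h(\overline{x})+\delta_2$ required by the prox-regularity of $h$ can be guaranteed for $x$ near $\overline{x}$ with $f(x)$ close to $f(\overline{x})$, which follows from the continuity of $g$ at $\overline{x}\in{\rm int}({\rm dom}\,g)$, and that the sum rule $\partial f=\nabla g+\partial h$ holds (a standard consequence of $g$ being smooth on an open set containing $\overline{x}$). Once these technical points are checked, the rest of the argument is a direct copy of the uniformly prox-regular case.
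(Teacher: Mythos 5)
Your proposal is correct and follows essentially the same route as the paper: it combines the descent-type inequality for $g$ (from the local Lipschitz continuity of $\nabla g$ near $\overline{x}$) with the uniform prox-regularity inequality for $h$, uses the sum rule $\partial f=\nabla g+\partial h$ and the continuity of $g$ to transfer the level condition from $f$ to $h$, thereby establishing the uniform prox-regularity of $f$ at $\overline{x}$. The only cosmetic difference is that the paper then invokes Proposition \ref{KL-Uprox} as a black box, whereas you replay its argument explicitly; the content is identical.
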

 \begin{proof}
  By Proposition \ref{KL-Uprox}, it suffices to argue that $f$ is uniformly
  prox-regular at $\overline{x}$. Since $\overline{x}\in{\rm dom}g$ and
  $g$ is smooth on ${\rm dom}g$, it follows that $\nabla g$ is strictly continuous
  at $\overline{x}$, i.e., there exist $\delta_1>0$ and $L_g>0$ such that
  $\mathbb{B}(\overline{x},\delta_1)\subset{\rm dom}g$ and
  \[
    \|\nabla g(y')-\nabla g(y)\|\le L_g\|y'-y\|\quad\ \forall y',y\in\mathbb{B}(\overline{x},\delta_1).
  \]
  Together with the mean-valued theorem, it follows that for all $y',y\in\mathbb{B}(\overline{x},\delta_1)$,
  \begin{align}\label{gineq}
    g(y')-g(y)&=\langle\nabla g(y),y'-y\rangle+\langle\nabla g(y+\overline{t}(y'-y))-\nabla g(y),y'-y\rangle\nonumber\\
    &\ge  \langle\nabla g(y),y'-y\rangle-L_g\|y'-y\|^2\ \ {\rm for\ some}\ \overline{t}\in(0,1).
  \end{align}
  In addition, since $h$ is uniformly prox-regular at $\overline{x}$,
  there exist $\delta_2>0$ and $\rho>0$ such that
  for all $z',z\in\mathbb{B}(\overline{x},\delta_2)$ and all $v\in\partial h(z)$
  with $h(z)\le h(\overline{x})+\delta_2$,
  \begin{equation}\label{hineq}
    h(z')-h(z)\ge\langle v,z'-z\rangle-\frac{\rho}{2}\|z'-z\|^2.
  \end{equation}
  Set $\overline{\delta}=\frac{1}{2}\min(\delta_1,\delta_2)$.
  Clearly, $\mathbb{B}(\overline{x},\overline{\delta})\subseteq{\rm dom}g$.
  Fix arbitrary $x',x\in\mathbb{B}(\overline{x},\overline{\delta})$ and an arbitrary
  $\xi\in\partial f(x)$ with $f(x)\le f(\overline{x})+\overline{\delta}$.
  From the continuity of $g$ at $\overline{x}$, we have $g(\overline{x})-g(x)<\delta_2/2$
  (if necessary by shrinking $\delta_1$). Together with $f(x)\le f(\overline{x})+\overline{\delta}$,
  we get
  \[
    h(x)\le h(\overline{x})+g(\overline{x})-g(x)\le h(\overline{x})+\delta_2.
  \]
  Since $\xi\in\partial f(x)=\nabla g(x)+\partial h(x)$, there exists $\zeta\in\partial h(x)$
  such that $\xi=\nabla g(x)+\zeta$. Now by invoking \eqref{hineq} with $z'=x'$ and $z=x$ and \eqref{gineq} with $y'=x'$ and $y=x$,
  it follows that
  \[
    f(x')\ge f(x)+\langle \nabla g(x)+\zeta,x'-x\rangle-\frac{2L_g\!+\!\rho}{2}\|x'-x\|^2.
  \]
  This shows that $f$ is uniformly prox-regular at $\overline{x}$.
  The proof is completed.
 \end{proof}

  Notice that the semiconvexity of $h$ implies its uniform prox-regularity.
  Hence, the conclusion of Theorem \ref{theo-composite} holds for the
  function $f$ in \eqref{composite1} with a semiconvex $h$. In this case,
  $f$ can be rewritten as $f=\widetilde{g}+\widetilde{h}$ where $\widetilde{g}$
  has the same property as $g$ does and $\widetilde{h}\!:\mathbb{X}\to(-\infty,+\infty]$
  is a proper convex function. Recently, for the nonconvex $f$ in \eqref{composite1}
  with a convex $h$, Li and Pong \cite{LiPong18} showed that under a proper
  separation assumption of stationary values (see \cite[Assumption B]{Luo93} or
  \cite[Assumption 4.1]{LiPong18}),
  the KL property of exponent $1/2$ is also implied by the Luo-Tseng
  error bound \cite{Tseng09} which is stated as: ``for any $\zeta\ge{\inf}_{z\in\mathbb{X}}f(z)$,
  there exist $\widetilde{\varepsilon}>0$ and $\widetilde{\kappa}>0$ such that
  \begin{equation}\label{Luo-error}
    {\rm dist}(x,(\partial\!f)^{-1}(0))\le \widetilde{\kappa}\|{\rm prox}_h(x-\nabla g(x))-x\|
  \end{equation}
  whenever $f(x)\le\zeta$ and $\|{\rm prox}_h(x-\nabla g(x))-x\|<\widetilde{\varepsilon}$''.
  It is natural to ask what is the link between the subregularity of $\partial\!f$
  relative to ${\rm crit}f$ and the Luo-Tseng error bound. When $g$ is convex,
  the results of \cite[Section 3]{Zhou17} actually show that under the compactness
  of ${\rm crit}f$, the former implies the latter. Here we extend this result to the nonconvex $g$.

  \begin{proposition}\label{error-subregular}
   For the function $f$ in \eqref{composite1} with a convex $h$, the following results hold.
   \begin{itemize}
    \item [(i)] Under Assumption \ref{assump1}, the Luo-Tseng error bound implies
                that $f$ is a KL function of exponent $1/2$.

    \item [(ii)] Under the continuity of $f$ on ${\rm crit}f$,
                 the Luo-Tseng error bound implies the subregularity of $\partial\!f$
                 relative to ${\rm crit}f$.

    \item[(iii)] If the critical set ${\rm crit}f$ is compact, the subregularity
                of $\partial\!f$ relative to ${\rm crit}f$ implies the following error bound:
                there exist $\tau>0$ and $\varpi>0$ such that
               \begin{equation}\label{aerror}
                {\rm dist}(z,(\partial\!f)^{-1}(0))\le\varpi\|{\rm prox}_h(z-\nabla g(z))-z\|
               \end{equation}
               for all $z\in{\rm dom}f$ with ${\rm dist}(z,(\partial\!f)^{-1}(0))\le\tau$.
               So, the Luo-Tseng error bound holds.
  \end{itemize}
  \end{proposition}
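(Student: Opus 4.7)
The plan is to prove the three items in the order (ii), (i), (iii), since (i) follows immediately by feeding (ii) into Theorem \ref{theo-composite}. In both (ii) and (iii) the common tool is the decomposition $\partial f(x)=\nabla g(x)+\partial h(x)$, valid since $g$ is smooth on its open domain, together with standard properties of the proximal mapping of the convex function $h$ (firm non-expansiveness of $\mathrm{prox}_h$ and the characterization $u=\mathrm{prox}_h(y)\Leftrightarrow y-u\in\partial h(u)$).

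For part (ii), fix $\overline{x}\in\mathrm{crit}f$ and write $R(x):=\mathrm{prox}_h(x-\nabla g(x))-x$. The first key step is the estimate $\|R(x)\|\le\mathrm{dist}(0,\partial f(x))$: for any $v=\nabla g(x)+\zeta\in\partial f(x)$ with $\zeta\in\partial h(x)$, the identity $x=\mathrm{prox}_h(x+\zeta)$ and non-expansiveness of $\mathrm{prox}_h$ give $\|R(x)\|=\|\mathrm{prox}_h(x-\nabla g(x))-\mathrm{prox}_h(x+\zeta)\|\le\|v\|$, and one takes the infimum over $v$. By continuity of $f$ at $\overline{x}$, fix $\varepsilon>0$ so small that $f(x)\le f(\overline{x})+1$ on $\mathbb{B}(\overline{x},\varepsilon)$, and invoke the Luo-Tseng bound with $\zeta=f(\overline{x})+1$ to obtain $\widetilde{\varepsilon},\widetilde{\kappa}$. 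A two-case split closes the argument: if $\mathrm{dist}(0,\partial f(x))<\widetilde{\varepsilon}$, then $\|R(x)\|<\widetilde{\varepsilon}$ and Luo-Tseng yields $\mathrm{dist}(x,\mathrm{crit}f)\le\widetilde{\kappa}\,\mathrm{dist}(0,\partial f(x))$; otherwise $\mathrm{dist}(x,\mathrm{crit}f)\le\|x-\overline{x}\|\le(\varepsilon/\widetilde{\varepsilon})\,\mathrm{dist}(0,\partial f(x))$. Taking $\kappa=\max\{\widetilde{\kappa},\varepsilon/\widetilde{\varepsilon}\}$ establishes subregularity at $\overline{x}$. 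Part (i) then follows at once: convexity of $h$ implies its uniform prox-regularity, so (ii) combined with Theorem \ref{theo-composite} delivers the KL property of exponent $1/2$ at every $\overline{x}\in\mathrm{crit}f$ under Assumption \ref{assump1}, which upgrades to a global KL-of-exponent-$1/2$ property via Remark \ref{KL-remark}.

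For part (iii), compactness of $\mathrm{crit}f$ together with a finite subcovering of the individual subregularity neighborhoods yields uniform constants $\varepsilon_0,\kappa>0$ and a common Lipschitz modulus $L_g$ for $\nabla g$, such that subregularity holds at every point of the tube $\mathbb{B}(\mathrm{crit}f,\varepsilon_0)$. Set $\tau=\varepsilon_0/(1+L_g)$, pick $z\in\mathrm{dom}f$ with $\mathrm{dist}(z,\mathrm{crit}f)\le\tau$, and let $\hat{z}:=\mathrm{prox}_h(z-\nabla g(z))$. Choosing $\overline{z}\in\mathrm{crit}f$ with $\|z-\overline{z}\|\le\tau$, the fixed-point identity $\overline{z}=\mathrm{prox}_h(\overline{z}-\nabla g(\overline{z}))$ and non-expansiveness give $\|\hat{z}-\overline{z}\|\le(1+L_g)\|z-\overline{z}\|\le\varepsilon_0$, so $\hat{z}$ lies in the subregularity tube. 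The prox optimality delivers $(z-\hat{z})+\nabla g(\hat{z})-\nabla g(z)\in\partial f(\hat{z})$, hence $\mathrm{dist}(0,\partial f(\hat{z}))\le(1+L_g)\|z-\hat{z}\|$; applying subregularity at $\hat{z}$ followed by the triangle inequality yields \eqref{aerror} with $\varpi=1+\kappa(1+L_g)$. To upgrade \eqref{aerror} to the Luo-Tseng form for a fixed sublevel $\zeta$, I would argue by contradiction/sequential compactness that $f(z)\le\zeta$ together with sufficiently small $\|R(z)\|$ forces $z\in\mathbb{B}(\mathrm{crit}f,\tau)$, at which point \eqref{aerror} applies.

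The hard part is the last step of (iii): passing from the local bound \eqref{aerror} (phrased around $\mathrm{crit}f$) to the Luo-Tseng form (phrased on sublevel sets) is not automatic and requires a compactness argument exploiting compactness of $\mathrm{crit}f$, continuity of $R(\cdot)$, and the characterization $R(z)=0\Leftrightarrow z\in\mathrm{crit}f$. Everything else—the decomposition trick in (ii), the covering argument in (iii), and the appeal to Theorem \ref{theo-composite} for (i)—is fairly routine, but keeping track of constants across (ii) and (iii) so that the two halves fit together cleanly will require some care.
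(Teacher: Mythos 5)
Your arguments for (ii) and (iii) are essentially the paper's: in (ii) you verify the Luo--Tseng proviso near $\overline{x}$ and convert $\|\mathcal{R}(x)\|$ into ${\rm dist}(0,\partial\!f(x))$ (your direct nonexpansiveness proof of $\|\mathcal{R}(x)\|\le{\rm dist}(0,\partial\!f(x))$ plus the two-case split is a legitimate substitute for the paper's Lipschitz estimate $\|\mathcal{R}(x)\|\le(L+2)\|x-\overline{x}\|$ combined with \cite[Lemma 4.1]{LiPong18}), and in (iii) your uniformization over the compact ${\rm crit}f$ and the evaluation of subregularity at $\hat z=z+\mathcal{R}(z)$ via the prox optimality condition reproduce the paper's computation, with only cosmetically different constants. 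The genuine gap is in (i): you deduce it from (ii) together with Theorem \ref{theo-composite}, but (ii) carries the extra hypothesis that $f$ is continuous on ${\rm crit}f$, which (i) does not assume. Since $h$ is only a proper convex function (think of $h=\delta_S$), $f$ can be discontinuous at its critical points; then one cannot guarantee $f(x)\le\zeta$ on a full ball around $\overline{x}$, the Luo--Tseng bound cannot be invoked at all points of a neighborhood, and subregularity of $\partial\!f$ at $\overline{x}$ need not follow. The KL inequality, in contrast, only has to hold on the slice $\mathbb{B}(\overline{x},\varepsilon)\cap[f(\overline{x})<f<f(\overline{x})+\eta]$, where the sublevel condition $f\le f(\overline{x})+\eta$ is automatic; this is precisely why the paper proves (i) directly by the argument of \cite[Theorem 4.1]{LiPong18} rather than by routing it through (ii). As written, your proof establishes (i) only under an additional continuity assumption, i.e.\ a strictly weaker statement.

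A secondary caution on the last sentence of (iii): your plan to show by contradiction/sequential compactness that $f(z)\le\zeta$ and $\|\mathcal{R}(z)\|$ small force ${\rm dist}(z,{\rm crit}f)\le\tau$ only works if the offending sequence can be assumed bounded. Compactness of ${\rm crit}f$ does not bound points of the sublevel set $[f\le\zeta]$ when $g$ is nonconvex, so the argument as sketched does not close (the paper disposes of this step by citing \cite[Proposition 3]{Zhou17}, whose convex setting with compact solution set yields coercivity of $f$ and hence the needed boundedness). If you flesh out this step you will need either that extra structure or an additional assumption.
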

  \begin{proof}
   Write $\mathcal{R}(x):={\rm prox}_h(x-\nabla g(x))-x$ for $x\in\mathbb{X}$.
   Notice that $\partial\!f(x)=\nabla g(x)+\partial h(x)$ for each $x\in{\rm dom}g\cap{\rm dom}h$.
   Clearly, $x\in{\rm crit}f$ if and only if $\mathcal{R}(x)=0$.
   Part (i) follows by the proof of \cite[Theorem 4.1]{LiPong18}.
   We next prove that part (ii) holds. Fix an arbitrary $\overline{x}\in{\rm crit}f$.
   Since $\nabla\!g$ is strictly continuous at $\overline{x}$,
  there exist $\delta'>0$ and $L>0$ such that
  $\mathbb{B}(\overline{x},\delta')\subseteq{\rm dom}g$ and
  $\|\nabla g(z)-\nabla g(\overline{x})\|\le L\|z-\overline{x}\|$
  for all $z\in\mathbb{B}(\overline{x},\delta')$, and hence
  \(
   \|\mathcal{R}(x)\|=\|\mathcal{R}(x)-\mathcal{R}(\overline{x})\|\le (L+2)\|x-\overline{x}\|.
  \)
   Since the Luo-Tseng error bound holds,
   there exist $\widetilde{\varepsilon}>0$ and $\widetilde{\kappa}>0$ such that
   \(
   {\rm dist}(z,(\partial\!f)^{-1}(0))\le \widetilde{\kappa}\|\mathcal{R}(z)\|
   \)
   whenever $f(z)\le f(\overline{x})+1$ and $\|\mathcal{R}(z)\|<\widetilde{\varepsilon}$.
   Since $f$ is continuous at $\overline{x}$, there exists $\delta''>0$ such that
   $f(z)\le f(\overline{x})+1$ for all $z\in\mathbb{B}(\overline{x},\delta'')$.
   Set $\varepsilon=\min(\delta',\delta'',\frac{\widetilde{\varepsilon}}{L+3})$.
   For any $x\in\mathbb{B}(\overline{x},\varepsilon)$, since
   $f(x)\le f(\overline{x})+1$ and
   \(
   \|\mathcal{R}(x)\|<\widetilde{\varepsilon},
   \)
   it follows that
   \(
     {\rm dist}(x,(\partial\!f)^{-1}(0))\le \widetilde{\kappa}\|\mathcal{R}(x)\|,
   \)
   which by \cite[Lemma 4.1]{LiPong18} implies
   ${\rm dist}(x,(\partial\!f)^{-1}(0))\le \widetilde{\kappa}{\rm dist}(0,\partial f(x))$.
   So, $\partial\!f$ is subregular at $\overline{x}$ for the origin.

  \medskip
  \noindent
  {\bf(iii)} Since ${\rm crit}f\subseteq{\rm dom}g$, by the compactness of ${\rm crit}f$
  and the openness of ${\rm dom}g$, there exists $\delta'>0$ such that
  $\{u\in\mathbb{X}\!:{\rm dist}(u,{\rm crit}f)\le\delta'\}\subseteq {\rm dom}g$.
  By the smoothness of $g$ on ${\rm dom}g$, by shrinking $\delta>0$ if necessary,
  there exists $L>0$ such that $\|\nabla g(z)-\nabla g(z')\|\le L\|z-z'\|$
  for all $z,z'\in\{u\in\mathbb{X}\!:{\rm dist}(u,{\rm crit}f)\le\delta'\}$.
  In addition, since $\partial\!f$ is metrically subregular relative to
  the compact ${\rm crit}f$, by following the similar arguments as those
  for \cite[Proposition 2]{Zhou17}, there exist $\delta''>0$ and $\kappa''>0$ such that
  for all $x\in {\rm crit}f+\delta''\mathbb{B}_{\mathbb{X}}$,
  \begin{equation}\label{aineq1}
   {\rm dist}(x,(\partial\!f)^{-1}(0))\le\kappa''{\rm dist}(0,\partial\!f(x)).
   \end{equation}
  Fix an arbitrary $z\in{\rm dom}f$ with ${\rm dist}(z,(\partial\!f)^{-1}(0))
  \le\min(\frac{\delta''}{3+L},\delta')$.
  Notice that
  \begin{align*}
   {\rm dist}(z+\mathcal{R}(z),{\rm crit}f)
   &={\rm dist}(z+\mathcal{R}(z),(\partial\!f)^{-1}(0))
    \le{\rm dist}(z,(\partial\!f)^{-1}(0))+\|\mathcal{R}(z)\|\\
   &={\rm dist}(z,(\partial\!f)^{-1}(0))+\|\mathcal{R}(z)-\mathcal{R}(z^*)\|
   \ \ {\rm with}\ z^*\in\Pi_{{\rm crit}f}(z)\\
   &\le {\rm dist}(z,(\partial\!f)^{-1}(0)) + (L+2)\|z-z^*\|\\
   &\le (L+3){\rm dist}(z,(\partial\!f)^{-1}(0))\le\delta'',
   \end{align*}
   where the first inequality is using $\|\nabla g(z)-\nabla g(z^*)\|\le L\|z-z^*\|$.
   By \eqref{aineq1} we have
   \begin{align*}
     {\rm dist}(z+\mathcal{R}(z),(\partial\!f)^{-1}(0))
     &\le\kappa''{\rm dist}(0,\partial\!f(z+\mathcal{R}(z)))\\
     &=\kappa''\min_{\eta\in\partial h(z+\!\mathcal{R}(z))}\|\eta+\nabla g(z+\!\mathcal{R}(z))\|.
   \end{align*}
   Notice that
   \(
     -\mathcal{R}(z)-\nabla g(z)\in\partial h({\rm prox}_h(z-\nabla g(z)))
     =\partial h(z+\mathcal{R}(z)).
   \)
   Then,
   \begin{align*}
    {\rm dist}(z+\mathcal{R}(z),(\partial\!f)^{-1}(0))
    &\le\kappa''\|-\mathcal{R}(z)-\nabla g(z)+\nabla g(z+\mathcal{R}(z))\|\\
    &\le \kappa''(1+L)\|\mathcal{R}(z)\|,
   \end{align*}
  which implies that ${\rm dist}(z,(\partial\!f)^{-1}(0))\le[\kappa''(1+\!L)+1]\|\mathcal{R}(z)\|$.
  By the arbitrariness of $z$, \eqref{aerror} holds with $\tau=\min(\frac{\delta''}{3+L},\delta')$
  and $\kappa'=[\kappa''(1+\!L)+1]$. The last part follows by using
  the same arguments as those for \cite[Proposition 3]{Zhou17}.
  The proof is completed.
 \end{proof}
 \begin{remark}
  For the function $f$ in \eqref{composite1} with a convex $h$ and a $C^1$-smooth
  $g\!:\mathbb{R}^n\to\mathbb{R}$ and Lipschitz continuous gradient $\nabla\!g$,
  Section 8 of \cite{Drusvyatskiy18} establishes the equivalence between
  the subregularity of $\partial\!f$ at $\overline{x}\in{\rm crit}f$
  for the origin and the subregularity of $\mathcal{R}$ at $\overline{x}\in\mathcal{R}^{-1}(0)$
  for the origin. This means that the conclusions (i) and (ii) of Lemma \ref{error-subregular}
  still hold when the subregularity of $\partial\!f$ is replaced by that of the proximal
  mapping $\mathcal{R}$.
 \end{remark}
 \section{Application of main result}\label{sec4}

  We illustrate the applications of Theorem \ref{theorem-nonconvex}
  or \ref{theo-composite} in achieving the KL property of exponent $1/2$
  for the zero-norm regularized quadratic homogenous functions.
 \begin{example}\label{exam31}
  Let $C$ be the zero-norm constraint set in Lemma \ref{deltaC} of Appendix B and
  $M$ be a $p\times p$ matrix. The function
  $f(x)\!:=\frac{1}{2}x^{\mathbb{T}}Mx+\delta_{C}(x)$ has the KL property of
  exponent $1/2$ at each $\overline{x}\in{\rm crit}f$ with $\|\overline{x}\|_0=\kappa$.
  Indeed, by Lemma \ref{deltaC} of Appendix B, the function $f$ is uniformly prox-regular
  at $\overline{x}$. Since $\partial\!f(x)=\frac{1}{2}(M+M^{\mathbb{T}})x+\mathcal{N}_C(x)$
  for all $x\in C$, from the characterization of $\mathcal{N}_C(x)$ in \cite[Theorem 3.9]{Bauschke14},
  we know that $\partial\!f$ is polyhedral, which by \cite[Proposition 1]{Robinson81}
  shows that $\partial\!f$ is metrically subregular at each point of its graph.
  In addition, for each $z\in{\rm crit}f$,
  there exists $u\in\mathcal{N}_C(z)$ such that $\frac{1}{2}(M+\!M^{\mathbb{T}})z+u=0$,
  and hence $f(z)=-\frac{1}{2}\langle z,u\rangle=0$,
  where the last equality is by the expression of $\mathcal{N}_C(z)$.
  This implies that Assumption \ref{assump1} holds. Thus, by invoking
  Theorem \ref{KL-Uprox}, $f$ has the KL property of exponent $1/2$ at $\overline{x}$.
  It is worthwhile to point out that the KL property of exponent $1/2$
  for $f$ at $\overline{x}$ can not be identified by \cite[Corollary 5.2]{LiPong18}
  since $f$ is not continuous at $\overline{x}$.
  \end{example}
  \begin{example}\label{exam32}
  Let $\Gamma\!:=\{Z\in\mathbb{R}^{n\times m}\ |\ \|Z\|_0\le\kappa\}$
  for an integer $\kappa\ge 1$. Consider
  \[
    F(U,V):=\langle U,\mathcal{A}(V)\rangle +\delta_{\Gamma}(U)+\delta_{\Gamma}(V)\quad\ \forall U,V\in\mathbb{R}^{n\times m}
  \]
  where $\mathcal{A}(V)\!:=AV$ for some $A\in\mathbb{R}^{n\times n}$.
  By arranging the variables $U$ and $V$ as the vectors and using the arguments
  as for Example \ref{exam31}, we conclude that $F$ has the KL property of
  exponent $1/2$ at those $(\overline{U},\overline{V})\in{\rm crit}F$ with
  $\|\overline{U}\|_0=\|\overline{V}\|_0=\kappa$.
  \end{example}
  \begin{example}\label{exam33}
   Consider $f(x):=\frac{1}{2}x^{\mathbb{T}}Mx+\delta_{C\cap\mathbb{R}_{+}^p}(x)$
   where $M$ is same as Example \ref{exam31}. Using the similar arguments as for
   Example \ref{exam31} and  Lemma \ref{deltaCR+} of Appendix B, we conclude that $f$
   has the KL property of exponent $1/2$ at those $\overline{x}\in{\rm crit}f$
   with $\|\overline{x}\|_0=\kappa$.
 \end{example}
 \section{Conclusions}\label{sec4}

  For a proper extended real-valued function, we have disclosed the relation
  between the subregularity of its subdifferential relative to the critical set
  and its KL property of exponent $1/2$, and the link between the former
  and the Luo-Tseng error bound for a class of structured nonconvex functions.
  We illustrate by examples the application of the obtained results in seeking
  new KL functions of exponent $1/2$.

 \bigskip
  \noindent
  {\large\bf Acknowledgements} \ \  This work is supported by the National Natural Science
  Foundation of China under project No.11571120 and No.11701186. The first author would like to
  thank Professor Ting Kei Pong for pointing out a gap in the original manuscript.

 \bigskip
 \noindent
 {\bf Appendix}
 \begin{alemma}\label{KL-convex}
  The subdifferential mappings of the following functions are subregular
  at every point of their graphs:
  \begin{itemize}
    \item[(i)] the piecewise linear function;

    \item[(ii)] the piecewise linear-quadratic convex function and its conjugate;

    \item[(iii)]  the vector $\ell_p$-norm with $p\in[1,2]\cup\{+\infty\}$;

    \item[(iv)]  the indictor function on the $p$-order cone with $p\in[1,2]\cup\{+\infty\}$;

    \item[(v)] the indicator function on the $C^2$-cone reducible self-dual closed convex cone;

    \item[(vi)] the spectral function $g(\lambda(X))$ for $X\in\!\mathbb{S}^n$ where $g\!:\mathbb{R}^n\to(-\infty,+\infty]$
                is a proper lsc symmetric convex function and $\partial g$ is metrically subregular
                at any $(\overline{x},\overline{y})\in{\rm gph}\partial g$;

   \item[(vii)] the spectral function $g(\sigma(X))$ for $X\in\!\mathbb{R}^{n_1\times n_2}$
                where $g\!:\mathbb{R}^n\to(-\infty,+\infty]$
                is a proper lsc absolute symmetric convex function with
                $n=\min(n_1,n_2)$ and $\partial g$ is metrically subregular
                at any $(\overline{x},\overline{y})\in{\rm gph}\partial g$.
 \end{itemize}
 \end{alemma}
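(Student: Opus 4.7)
\textbf{Polyhedral cases (i), (ii) and the $p\in\{1,+\infty\}$ instances of (iii), (iv).} The plan is to exhibit, in each of these cases, a polyhedral graph for $\partial\!f$. For a piecewise linear $f$ this is immediate from the structure of $f$; for a piecewise linear-quadratic convex $f$, the polyhedrality of ${\rm gph}\,\partial\!f$ follows from \cite{Sun86}, and since the piecewise linear-quadratic convex class is closed under Legendre-Fenchel conjugacy (see \cite[Theorem 11.14]{RW98}) the same conclusion gives (ii); finally, $\|\cdot\|_1$, $\|\cdot\|_\infty$ and the indicators of the $1$- and $+\infty$-order cones are themselves piecewise linear. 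In each case $(\partial\!f)^{-1}$ is then a polyhedral multifunction, hence calm at every point of its graph by Robinson's theorem \cite{Robinson81}; by the calmness/subregularity duality recalled after Definition \ref{subregularity}, this is exactly the desired subregularity of $\partial\!f$.

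\textbf{Structured cone cases: $p\in(1,2]$ in (iii), (iv) and item (v).} For $p\in(1,2]$ I would invoke \cite{SunBiPan18,LiuPan18}, where the metric subregularity of the normal cone mapping to the $p$-order cone is established at every point of its graph; this directly covers (iv), and (iii) then follows by reducing $\partial\|\cdot\|_p$ to the normal cone of the epigraphical $p$-order cone in $\mathbb{X}\times\mathbb{R}$ via the relation $(v,\tau)\in N_{\mathcal{K}_p}(x,t)$ iff $\tau\le 0$ and $v\in\tau\,\partial\|\cdot\|_p(x)$. For (v), $C^2$-cone reducibility lets me identify, locally around any point of ${\rm gph}\,\mathcal{N}_K$, the normal cone mapping of $K$ with that of a self-dual ``model'' cone through a $C^2$-diffeomorphism; self-duality permits a direct check of subregularity at each graph point of the model normal cone mapping, and the diffeomorphism then transfers the estimate back to $K$.

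\textbf{Spectral cases (vi), (vii).} Here the strategy is to lift the assumed subregularity of $\partial g$ on $\mathbb{R}^n$ to the matrix level using the spectral formula $\partial(g\circ\lambda)(X)=\{Q\,{\rm Diag}(v)\,Q^{\mathbb{T}}:Q\in\mathcal{O}(X),\,v\in\partial g(\lambda(X))\}$ (and its singular-value analogue for (vii)). Given $Z$ near $\overline{X}$, I would select $Q$ with $Z=Q\,{\rm Diag}(\lambda(Z))\,Q^{\mathbb{T}}$ and $v\in\partial g(\lambda(Z))$ attaining (or nearly attaining) ${\rm dist}(0,\partial(g\circ\lambda)(Z))$; subregularity of $\partial g$ at $\lambda(\overline{X})$ then yields $\bar v\in(\partial g)^{-1}(0)$ with $\|\bar v-\lambda(Z)\|\le\kappa\|v\|$, and $\overline{Z}:=Q\,{\rm Diag}(\bar v)\,Q^{\mathbb{T}}$ is a critical point of $g\circ\lambda$; the orthogonal invariance of the Frobenius norm and the nonexpansiveness $\|\lambda(Z)-\lambda(\overline{X})\|\le\|Z-\overline{X}\|$ of the eigenvalue map combine to give $\|\overline{Z}-Z\|\le\kappa\,{\rm dist}(0,\partial(g\circ\lambda)(Z))$, which is exactly what is required. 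The main obstacle will be eigenvalue multiplicity at $\overline{X}$, where the factor $Q$ is not unique: I expect to align $Q$ with a fixed spectral decomposition of $\overline{X}$ and use the symmetry of $g$ to absorb the resulting stabilizer subgroup, so that the lifted distance estimate does not degrade under the choice of $Q$. Case (vii) is entirely analogous via a simultaneous singular-value decomposition together with the absolute symmetry of $g$.
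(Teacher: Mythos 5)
Your treatment of the polyhedral cases is exactly the paper's: exhibit a polyhedral graph and invoke Robinson's result \cite{Robinson81} (with \cite{Sun86} or \cite[Proposition 12.30]{RW98} for the piecewise linear-quadratic case), then pass through the calmness/subregularity duality. The paper, however, disposes of the remaining items purely by citation -- (iii) to \cite{Zhou15}, (iv) to \cite{SunBiPan18}, (v) to \cite[Theorem 2.1]{LiuPan18}, (vi)--(vii) to \cite[Proposition 15]{Cui17} -- whereas you attempt to sketch the arguments, and it is in these sketches that genuine gaps appear.

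The most serious one is in (vi)--(vii). The lemma asserts subregularity at \emph{every} point $(\overline{X},\overline{Y})\in{\rm gph}\,\partial(g\circ\lambda)$, but your lifting argument only treats $\overline{Y}=0$: there you can exploit ${\rm dist}(0,\partial(g\circ\lambda)(Z))={\rm dist}(0,\partial g(\lambda(Z)))$ and build $\overline{Z}=Q\,{\rm Diag}(\bar v)\,Q^{\mathbb{T}}$ from the same $Q$ that diagonalizes $Z$. For a general $\overline{Y}\neq 0$ the quantity ${\rm dist}(\overline{Y},\partial(g\circ\lambda)(Z))$ does not collapse to a vector distance, because $\overline{Y}$ and $Z$ need not admit a simultaneous spectral decomposition; handling this (via von Neumann-type inequalities and a careful choice of simultaneous decompositions, precisely where eigenvalue multiplicity bites) is the actual content of \cite[Proposition 15]{Cui17}, which you would need to either reprove or cite. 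Similarly, in (v) the claim that $C^2$-cone reducibility lets you identify $\mathcal{N}_K$ locally with the normal cone map of a self-dual model cone through a $C^2$-diffeomorphism, and that self-duality then permits a ``direct check,'' is unsubstantiated: the set onto which a $C^2$-cone reducible cone reduces need not be self-dual (self-duality is a hypothesis on $K$ itself, not on the reduction), and metric subregularity does not transfer through such a reduction without additional work; this is what \cite[Theorem 2.1]{LiuPan18} provides. Finally, in (iii) for $p\in(1,2]$ your epigraphical reduction has a sign slip (the correct relation is $v\in-\tau\,\partial\|\cdot\|_p(x)$ with $\tau\le 0$) and, more importantly, converting subregularity of $\mathcal{N}_{\mathcal{K}_p}$ -- an estimate in $\mathbb{X}\times\mathbb{R}$ -- into subregularity of $\partial\|\cdot\|_p$ at $\overline{x}$ for an arbitrary $\overline{v}\in\partial\|\overline{x}\|_p$ requires controlling the extra scalar component and the scaling $\tau$; the paper avoids this by citing \cite{Zhou15} directly. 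So the skeleton is right, but items (iii) ($p\in(1,2]$), (v), (vi) and (vii) are not yet proved as written.
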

 \begin{aproof}
  Part (i) follows by \cite[Proposition 1]{Robinson81};
  part (ii) follows from \cite{Sun86} or \cite[Proposition 12.30]{RW98};
  part (iii) is due to \cite{Zhou15}; part (iv) can be found in \cite{SunBiPan18};
  part (v) is due to \cite[Theorem 2.1]{LiuPan18};
  and part (vi)-(vii) follows by \cite[Proposition 15]{Cui17}.
  \end{aproof}
 \begin{alemma}\label{deltaC}
  Let $C\!:=\{x\in\mathbb{R}^p\ |\ \|x\|_0\!\le\kappa\}$ for a positive
  integer $\kappa$. Then, the indicator function $\delta_C$ is pln and
  and uniformly prox-regular at those $x\in\mathbb{R}^p$ with $\|x\|_0=\kappa$.
 \end{alemma}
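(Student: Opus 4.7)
The plan is to exploit the fact that at a point $\overline{x}$ with $\|\overline{x}\|_0=\kappa$ (the \emph{maximal} support size allowed by $C$), the set $C$ agrees locally with a linear subspace, so $\delta_C$ is locally convex, and convexity makes both pln and uniform prox-regularity immediate with constant $\rho=0$.

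First I would establish the local structure. Let $I:={\rm supp}(\overline{x})$, so $|I|=\kappa$, and set $L_I:=\{x\in\mathbb{R}^p\,|\,x_i=0\ \forall i\notin I\}$. Choose $\overline{\varepsilon}\in(0,\min_{i\in I}|\overline{x}_i|)$. For any $x\in\mathbb{B}(\overline{x},\overline{\varepsilon})\cap C$, the estimate $|x_i-\overline{x}_i|<|\overline{x}_i|$ for $i\in I$ forces $x_i\neq 0$ on $I$, hence ${\rm supp}(x)\supseteq I$; combined with $\|x\|_0\le\kappa=|I|$, this yields ${\rm supp}(x)=I$, i.e.\ $x\in L_I$. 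The reverse inclusion $L_I\cap\mathbb{B}(\overline{x},\overline{\varepsilon})\subseteq C$ is immediate. Therefore
\[
 C\cap\mathbb{B}(\overline{x},\overline{\varepsilon})=L_I\cap\mathbb{B}(\overline{x},\overline{\varepsilon}),
\]
and consequently $\delta_C$ and $\delta_{L_I}$ coincide on $\mathbb{B}(\overline{x},\overline{\varepsilon})$. Since the subdifferentials (regular, proximal, limiting) are intrinsically local objects, for every $x\in L_I\cap\mathbb{B}(\overline{x},\overline{\varepsilon})$ we have $\partial^P\delta_C(x)=\partial\delta_C(x)=\partial\delta_{L_I}(x)=L_I^{\perp}$.

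Next I would check the two regularity definitions by invoking convexity of $\delta_{L_I}$. For any $x\in L_I\cap\mathbb{B}(\overline{x},\overline{\varepsilon})$, any $v\in L_I^{\perp}$, and any $y\in\mathbb{R}^p$, the inclusion $y-x\in L_I$ whenever $y\in L_I$ gives $\langle v,y-x\rangle=0=\delta_{L_I}(y)-\delta_{L_I}(x)$, while for $y\notin L_I$ the right-hand side is $+\infty$. Hence
\[
 \delta_C(y)\ge\delta_C(x)+\langle v,y-x\rangle\ge\delta_C(x)+\langle v,y-x\rangle-\frac{\rho}{2}\|y-x\|^2
\]
for every $\rho\ge 0$ and all $x,y\in\mathbb{B}(\overline{x},\overline{\varepsilon})$. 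Taking any $\overline{\rho}>0$ and any $\overline{c}>0$ in Definition \ref{Def-pln} shows $\delta_C$ is pln at $\overline{x}$; taking $\rho=0$ and $\delta=\overline{\varepsilon}$ in Definition \ref{Uprox-regular} shows $\delta_C$ is uniformly prox-regular at $\overline{x}$.

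The only nontrivial step is the local identification $C\cap\mathbb{B}(\overline{x},\overline{\varepsilon})=L_I\cap\mathbb{B}(\overline{x},\overline{\varepsilon})$, and this is a direct consequence of the support-preservation argument using the assumption $\|\overline{x}\|_0=\kappa$ (rather than $<\kappa$). The fact that the conclusion genuinely requires $\|\overline{x}\|_0=\kappa$ is also evident: when $\|\overline{x}\|_0<\kappa$, arbitrarily close to $\overline{x}$ one finds points lying in several different $\kappa$-dimensional coordinate subspaces, and $C$ cannot be reduced locally to a single subspace, which is why the statement restricts to the maximal-support case.
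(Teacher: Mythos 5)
Your proof is correct and follows essentially the same route as the paper's: both rest on the observation that near a point of maximal support every point of $C$ has the same support, so any (proximal/limiting) normal vector is orthogonal to differences of points of $C$ in the ball, making the defining inequalities of pln and uniform prox-regularity hold trivially (indeed with $\rho=0$). Your packaging of this as the local identification $C\cap\mathbb{B}(\overline{x},\overline{\varepsilon})=L_I\cap\mathbb{B}(\overline{x},\overline{\varepsilon})$ plus locality of subdifferentials merely replaces the paper's citation of the formula $\mathcal{N}_C(x)=[\![x]\!]^{\perp}$, and is equally valid.
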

 \begin{proof}
  Fix an arbitrary $\overline{x}\in\mathbb{R}^p$ with $\|\overline{x}\|_0=\kappa$.
  Clearly, there exists $\epsilon>0$ such that for all $z\in\mathbb{B}(\overline{x},\epsilon)$,
  ${\rm supp}(z)\supseteq{\rm supp}(\overline{x})$. Choose $\overline{\rho}=1$
  and $c=1$. Fix an arbitrary $x\in\mathbb{B}(\overline{x},\epsilon)$.
  Take $\rho>\overline{\rho}$ and an arbitrary $v\in\mathcal{N}_C^{P}(x)$
  with $\|v\|\le c\rho$, where $\mathcal{N}_C^{P}(x)$ is the proximal
  normal cone to $C$ at $x$. Clearly, $x\in C$. Together with
  ${\rm supp}(x)\supseteq{\rm supp}(\overline{x})$ and $\|\overline{x}\|_0=\kappa$,
  it follows that ${\rm supp}(x)={\rm supp}(\overline{x})$.
  Fix an arbitrary $y\in\mathbb{B}(\overline{x},\epsilon)$.
  If $y\in C$, together with ${\rm supp}(y)\supseteq{\rm supp}(\overline{x})$
  we obtain ${\rm supp}(y)={\rm supp}(\overline{x})$. Notice that
  $\mathcal{N}_C^{P}(x)=\mathcal{N}_C(x)=[\![x]\!]^{\perp}$ (see \cite{Bauschke14}).
  Then, $\langle v,y-x\rangle=0$. Consequently, it holds that
  \begin{equation}\label{f-ineq}
   \delta_{C}(y)\ge \delta_{C}(x) + \langle v,y-x\rangle -\frac{\rho}{2}\|y-x\|^2;
  \end{equation}
  when $y\notin C$, this inequality holds since $\delta_C(y)=+\infty$.
  This shows that $\delta_C$ is pln at $\overline{x}$. The above arguments
  also show that, for any $x,y\in\mathbb{B}(\overline{x},\epsilon)$ and any
  $v\in\mathcal{N}_C(x)$ with $\delta_C(x)\le \delta_C(\overline{x})+\epsilon$,
  the inequality \eqref{f-ineq} holds. So, $f$ is uniformly prox-regular at $\overline{x}$.
 \end{proof}
 \begin{alemma}\label{deltaCR+}
  The indicator function $\delta_{C\cap\mathbb{R}_{+}^p}$ of $C\cap\mathbb{R}_{+}^p$
  is uniformly prox-regular at those $x\in C\cap\mathbb{R}_{+}^p$ with $\|x\|_0=\kappa$,
  where $C$ is same as Lemma \ref{deltaC}.
 \end{alemma}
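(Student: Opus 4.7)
The plan is to adapt the proof of Lemma \ref{deltaC} by observing that, near a feasible point $\overline{x}\in C\cap\mathbb{R}_{+}^p$ with $\|\overline{x}\|_0=\kappa$, the nonnegativity constraint plays no active role, so the local geometry of $C\cap\mathbb{R}_{+}^p$ coincides with that of $C$ alone. First, I would fix such an $\overline{x}$, set $I:={\rm supp}(\overline{x})$, and choose $\epsilon>0$ small enough that every $z\in\mathbb{B}(\overline{x},\epsilon)$ satisfies $z_i>0$ for all $i\in I$; in particular ${\rm supp}(z)\supseteq I$.

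Next, for any $x\in\mathbb{B}(\overline{x},\epsilon)\cap C\cap\mathbb{R}_{+}^p$, the cardinality bound $\|x\|_0\le\kappa=|I|$ combined with ${\rm supp}(x)\supseteq I$ forces ${\rm supp}(x)=I$; hence $x_j=0$ for $j\in I^c$ while $x_i>0$ for $i\in I$. Thus on $\mathbb{B}(\overline{x},\epsilon)$ the set $C\cap\mathbb{R}_{+}^p$ coincides with a relatively open subset of the coordinate subspace $E_I:=\{z\in\mathbb{R}^p:z_j=0\ \forall j\in I^c\}$, so its proximal and limiting normal cones at $x$ both reduce to $E_I^{\perp}=[\![x]\!]^{\perp}$, which is exactly the normal cone used in the proof of Lemma \ref{deltaC}.

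With this identification in hand, the verification of the prox inequality proceeds verbatim as in Lemma \ref{deltaC}: for any $v\in\mathcal{N}_{C\cap\mathbb{R}_{+}^p}(x)$ and any $y\in\mathbb{B}(\overline{x},\epsilon)$, either $y\in C\cap\mathbb{R}_{+}^p$, in which case ${\rm supp}(y)=I$ so that $y-x\in E_I$ and hence $\langle v,y-x\rangle=0$, giving
\[
\delta_{C\cap\mathbb{R}_{+}^p}(y)\ge\delta_{C\cap\mathbb{R}_{+}^p}(x)+\langle v,y-x\rangle-\frac{\rho}{2}\|y-x\|^2
\]
for every $\rho\ge 0$; or $y\notin C\cap\mathbb{R}_{+}^p$, in which case the left-hand side is $+\infty$ and the inequality is trivial. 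Since this holds whenever $\delta_{C\cap\mathbb{R}_{+}^p}(x)\le\delta_{C\cap\mathbb{R}_{+}^p}(\overline{x})+\epsilon$ (a vacuous restriction as the indicator takes only values $0$ and $+\infty$), uniform prox-regularity at $\overline{x}$ is established.

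The only step that is genuinely new relative to Lemma \ref{deltaC} is the normal-cone computation, and the main subtlety there is ensuring that the nonnegativity constraint contributes no additional normal directions beyond $[\![x]\!]^{\perp}$. This is handled by the two observations above: strict positivity of $x_i$ on $I$ makes the constraint $z_i\ge 0$ inactive on $I$, while the cardinality restriction already forces $z_j=0$ on $I^c$, so nonnegativity is automatic there. Once this local reduction to an affine subspace is justified, no further obstacle is anticipated.
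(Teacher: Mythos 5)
Your proof is correct, and its key step differs from the paper's. The paper does not identify the set locally with a subspace; instead it invokes a normal-cone calculus rule for the intersection: it observes that the multifunction $\mathcal{F}(x)=(C-x)\times(\mathbb{R}_{+}^p-x)$ is polyhedral, cites Ioffe--Outrata to obtain the inclusion $\mathcal{N}_{C\cap\mathbb{R}_{+}^p}(x)\subseteq\mathcal{N}_{C}(x)+\mathcal{N}_{\mathbb{R}_{+}^p}(x)$, and then shows that each summand lies in $[\![x]\!]^{\perp}$ (using $\mathcal{N}_C(x)=[\![x]\!]^{\perp}$ and $\mathcal{N}_{\mathbb{R}_{+}^p}(x)\subseteq[\![x]\!]^{\perp}$ because the entries of $x$ on ${\rm supp}(\overline{x})$ are positive). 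Your route replaces this calculus argument with the elementary observation that, near $\overline{x}$, the constraint $z\ge 0$ is inactive on $I={\rm supp}(\overline{x})$ while the cardinality bound forces $z_j=0$ off $I$, so $C\cap\mathbb{R}_{+}^p$ locally coincides with the coordinate subspace $E_I$; since normal cones are local objects, this yields the exact equality $\mathcal{N}_{C\cap\mathbb{R}_{+}^p}(x)=E_I^{\perp}=[\![x]\!]^{\perp}$ rather than merely an upper inclusion, and it avoids any appeal to polyhedrality or calmness-type qualification conditions. The paper's approach is more robust (it would survive in situations where the intersection is not locally an affine subspace), whereas yours is shorter and self-contained; from the identification onward both proofs verify the prox-inequality identically. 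Two small points of precision: the level-set condition $\delta_{C\cap\mathbb{R}_{+}^p}(x)\le\delta_{C\cap\mathbb{R}_{+}^p}(\overline{x})+\epsilon$ is not literally vacuous --- it forces $x\in C\cap\mathbb{R}_{+}^p$, which is exactly the case in which $\partial\delta_{C\cap\mathbb{R}_{+}^p}(x)\ne\emptyset$, so nothing is lost; and to compute the normal cone at points $x$ on the boundary of $\mathbb{B}(\overline{x},\epsilon)$ you need the subspace identification on a neighborhood of $x$, which is obtained by choosing $\epsilon$ strictly smaller than $\min_{i\in I}\overline{x}_i$ (or by shrinking the prox-regularity radius to $\epsilon/2$); both are cosmetic fixes.
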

 \begin{proof}
  Notice that the multifunction $\mathcal{F}(x):=(C-x)\times(\mathbb{R}_{+}^p-x)$
  for $x\in\mathbb{R}^p$ is polyhedral. From \cite[Section 3.1]{Ioffe08},
  it follows that $\mathcal{N}_{C\cap\mathbb{R}_{+}^p}(x)
  \subseteq\mathcal{N}_{C}(x)+\mathcal{N}_{\mathbb{R}_{+}^p}(x)$ for
  $x\in C\cap\mathbb{R}_{+}^p$. Consider an arbitrary $\overline{x}\in\!C\cap\mathbb{R}_{+}^n$
  with $\|\overline{x}\|_0=\kappa$. Clearly, there exists $\epsilon>0$ such that
  for all $z\in\mathbb{B}(\overline{x},\epsilon)$,
  ${\rm supp}(z)\supseteq{\rm supp}(\overline{x})$. Fix arbitrary
  $x,y\in\mathbb{B}(\overline{x},\epsilon)$ and an arbitrary
  $v\in\mathcal{N}_{C}(x)+\mathcal{N}_{\mathbb{R}_{+}^p}(x)$ with
  $\delta_{C\cap\mathbb{R}_{+}^p}(x)\le \delta_{C\cap\mathbb{R}_{+}^p}(\overline{x})+\epsilon$.
  Clearly, $x\in C\cap\mathbb{R}_{+}^p$. Together with ${\rm supp}(x)\supseteq{\rm supp}(\overline{x})$
  and $\|\overline{x}\|_0=\kappa$, we have  ${\rm supp}(x)={\rm supp}(\overline{x})=\kappa$.
  Hence, $\mathcal{N}_{C}(x)=[\![x]\!]^{\perp}$. Since
  $\mathcal{N}_{\mathbb{R}_{+}^p}(x)\subseteq[\![x]\!]^{\perp}$, it follows that
  $v\in\mathcal{N}_{C}(x)+\mathcal{N}_{\mathbb{R}_{+}^p}(x)\subseteq[\![x]\!]^{\perp}$.
  If $y\in C$, along with ${\rm supp}(y)\supseteq{\rm supp}(\overline{x})$,
  we have ${\rm supp}(y)={\rm supp}(x)$, and consequently
  \[
    \delta_{C\cap\mathbb{R}_{+}^p}(y)\ge \delta_{C\cap\mathbb{R}_{+}^p}(x) + \langle v,y-x\rangle -\frac{\rho}{2}\|y-x\|^2
  \]
  holds for any $\rho\ge 0$; when $y\notin C$, this inequality automatically holds.
  This shows that $f$ is uniformly prox-regular at $\overline{x}$.
  The result follows by the arbitrariness of $\overline{x}$.
 \end{proof}


\begin{thebibliography}{1}

   \bibitem{Artacho08}
  {\sc F. J.\ Arag\'{o}n Artacho and M. H.\ Geoffroy},
  {\em Characterization of metric regularity of subdifferential},
  Journal of Convex Analysis, 15(2008): 365-380.

  \bibitem{Artacho14}
  {\sc F. J.\ Arag\'{o}n Artacho and M. H.\ Geoffroy},
  {\em Metric subregularity of the convex subdifferential in Banach spaces},
  Journal of Nonlinear Convex Analysis, 15(2014): 35-47.

 \bibitem{Attouch10}
  {\sc H.\ Attouch, J.\ Bolte, P.\ Redont and A.\ Soubeyran},
  {\em Proximal alternating minimization and projection methods for nonconvex problems: an approach
  based on the Kurdyka-{\L}ojasiewicz inequality},
  Mathematics of Operations Research, 35(2010): 438-457.


  \bibitem{Attouch13}
  {\sc H.\ Attouch, J.\ Bolte and B. F.\ Svaiter},
  {\em Convergence of descent methods for semi-algebraic and tame problems:
  proximal algorithms, forward-backward splitting, and reguarlized Gauss-Seidel methods},
  Mathematical Programming, 137(2013): 91-129.


  \bibitem{Bauschke14}
  {\sc H. H.\ Bauschke, D. R.\ Luke, H. M.\ Phan and X. F.\ Wang},
  {\em Restricted normal cones and sparsity optimization with affine constraints},
  Foundations of Computational Mathematics, 14(2014): 63-83.

  \bibitem{Bernard05}
  {\sc F.\ Bernard and L.\ Thibault},
  {\em Uniform prox-regularity of functions and epigraphs in Hilbert spaces},
  Nonlinear Analysis, 60(2005): 187-217.

  \bibitem{Bolte10}
  {\sc J.\ Bolte, A.\ Daniilidis, O.\ Ley and L.\ Mazet},
  {\em Characterizations of {\L}ojasiewicz inequalities: subgradient flows, talweg, convexity},
  Transactions of the American Mathematical Society, 362(2010): 3319-3363.


 \bibitem{Bolte14}
  {\sc J.\ Bolte, S.\ Sabach and M.\ Teboulle},
  {\em Proximal alternating linearized minimization for nonconvex and nonsmooth problems},
  Mathematical Programming, 146(2014): 459-494.

   \bibitem{Bolte17}
  {\sc J.\ Bolte, T. P.\ Nguyen, J.\ Peypouquet and B. W.\ Suter},
  {\em From error bounds to the complexity of first-order descent methods for convex functions},
  Mathematical Programming, 165(2017): 471-507.


  \bibitem{Brezis73}
  {\sc H.\ Br\'{e}zis},
  {\em Op\'{e}rateurs maximaux monotones et semi-groupes de contractions dans les espace Hilbert},
  North-Holland Mathematics studies 5.

  \bibitem{Bruck75}
  {\sc R. E.\ Bruck},
  {\em Asymptotic convergence of nonlinear contraction semigroups in Hilbert space},
  Journal of Functional Analysis, 18(1975): 15-26.

  \bibitem{Cui17}
  {\sc Y.\ Cui, C.\ Ding and X. Y.\ Zhao},
  {\em Quadratic growth conditions for convex matrix optimization problems associated with spectral functions},
  SIAM Journal on Optimization,  27(2017): 2332-2355.

   \bibitem{Cui18}
 {\sc Y.\ Cui, D. F.\ Sun and K. C.\ Toh},
 {\em On the R-superlinear convergence of the KKT residuals
 generated by the augmented Lagrangian method for convex composite conic programming},
 Mathematical Programming, (2018) https://doi.org/10.1007/s10107-018-1300-6.

 \bibitem{DR09}
 {\sc A. L.\ Dontchev and R. T.\ Rockafellar},
 {\em Implicit Functions and Solution Mappings},
  Springer Monographs in Mathematics, LLC, New York, 2009.

    \bibitem{Dong09}
  {\sc Y. D.\ Dong},
  {\em An extension of Luque's growth condition},
  Applied Mathematics Letters, 22(2009): 1390-1393.

  \bibitem{Drusvyatskiy15}
  {\sc D.\ Drusvyatskiy and A. D.\ Ioffe},
  {\em Quadratic growth and critical point stability of semi-algebraic functions},
  Mathemathcal Programming, 153(2015): 635-653.


  \bibitem{Drusvyatskiy18}
  {\sc D.\ Drusvyatskiy and A. S.\ Lewis},
  {\em Error bounds, quadratic growth, and linear convergence of proximal methods},
  Mathematics of Operations Research, 2018: 1-30.

  \bibitem{Daniilidis08}
  {\sc A.\ Daniilidis, A. S.\ Lewis, J.\ Malick and H.\ Sendov},
  {\em Prox-regularity of spectral functions and spectral sets},
  Journal of Convex Analysis, 15(2018): 547-560.


  \bibitem{Gfrerer13}
  {\sc H.\ Gfrerer},
  {\em On directional metric regularity, subregularity and optimality conditions
  for nonsmooth mathematical programs},
  Set-Valued and Variational Analysis, 21(2013): 151-176.

   \bibitem{Henrion02}
  {\sc H. \ Henrion, J. V. \ Outrata},
  {\em On the calmness of a class of multifunctions},
   SIAM Journal on Optimization, 13(2002): 603-618.


  \bibitem{Ioffe08}
  {\sc A. D.\ Ioffe and J. V.\ Outrata},
  {\em On metric and calmness qualification conditions in subdifferential calculus},
  Set-Value and Variational Analysis, 16(2008): 199-227.

   \bibitem{Levy95}
  {\sc A. B.\ Levy, R. A.\ Poliquin and L.\ Thibault},
  {\em Partial extensions of Attouch's theorem with applications to proto-derivatives of subgradient mappings},
  Transactions of the American Mathematical Society, 347(1995): 1269-1294.

  \bibitem{LiPong18}
  {\sc G. Y.\ Li and T. K.\ Pong},
  {\em Calculus of the exponent of Kurdyka-{\L}ojasiewicz inequality and its applications
  to linear convergence of first-order methods},
  Foundations of Computational Mathematics, 18(2018): 1199-1232.

  \bibitem{LiuPan18}
  {\sc Y. L.\ Liu and S. H.\ Pan},
  {\em Computation of graphical derivatives of normal cone maps to a class of conic constraint sets},
   Set-Valued Var. Anal., Doi.org/10.1007/s11228-018-0494-3


  \bibitem{Luo92}
  {\sc Z. Q.\ Luo and P.\ Tseng},
  {\em Error bounds and convergence analysis of matrix splitting algorithms
  for the affine variational inequality problem},
  SIAM Journal on Optimization, 1(1992): 43-54.

  \bibitem{Luo93}
  {\sc Z. Q.\ Luo and P.\ Tseng},
  {\em Error bounds and convergence analysis of feasible descent methods: a general approach},
   Annals of Operations Research, 46(1993): 157-178.


  \bibitem{Marcellin06}
  {\sc S.\ Marcellin and L.\ Thibault},
  {\em Evolution problems associated with primal lower nice functions},
  Journal of Convex Analysis, 13(2006): 385-421.

  \bibitem{Ngai09}
  {\sc H. V.\ Nagi and M.\ Th\'{e}ra},
  {\em Error bounds for systems of lower semicontinuous functions in Asplund spaces},
  Mathematical Programming, 116(2009): 397-427.

  \bibitem{Poliquin91}
  {\sc R. A.\ Poliquin},
  {\em Integration of subdifferentials of nonconvex functions},
  Nonlinear Analysis Theory Methods \& Applications, 17(1991): 385-398.


  \bibitem{RW98}
 {\sc R. T.\ Rockafellar and R. J-B.\ Wets},
 {\em Variational Analysis}, Springer, 1998.



  \bibitem{Robinson81}
  {\sc S. M.\ Robinson},
  {\em Some continuity properties of polyhedral multifunctions},
  Mathematical Programming Study, 14(1981): 206-214.

   \bibitem{Sun86}
 {\sc J.\ Sun},
 {\em On Monotropic Piecewise Qudratic Programming},
 Ph.D Thesis, Department of Mathematics, University of Washington, Seattle, 1986.

  \bibitem{SunBiPan18}
  {\sc Y.\ Sun, S. H.\ Pan and S. J.\ Bi},
  {\em Metric subregularity and/or calmness of the normal cone mapping to the $p$-order conic constraint system},
  Optimization Letters, 13(2019): 1095-1110.


  \bibitem{Tseng09}
  {\sc P.\ Tseng and S.\ Yun},
  {\em A coordinate gradient descent method for nonsmooth separable minimization},
  Mathematical Programming, 117(2009): 387-423.

 \bibitem{Zhou15}
  {\sc Z. R.\ Zhou},
  {\em Error Bounds for Structured Convex Programming: Theory and Applications}.
  PhD thesis, The Chinese University of Hong Kong, 2015.

  \bibitem{Zhou17}
 {\sc Z. R.\ Zhou and A.M.-C.\ So},
 {\em A unified approach to error bounds for structured convex optimization problems},
 Mathematical Programming, 165(2017): 689-728.

 \end{thebibliography}
\end{document}